\let\pa\partial
\let\eps\varepsilon
\newcommand{\N}{{\mathbb N}}
\newcommand{\R}{{\mathbb R}}
\newcommand{\diver}{\operatorname{div}}
\newcommand{\E}{{\mathcal E}}
\newcommand{\de}{\textnormal{d}}
\newcommand{\rb}{\bar\rho}
\newtheorem{theorem}{Theorem}
\newtheorem{lemma}[theorem]{Lemma}
\newtheorem{remark}[theorem]{Remark}
\newtheorem{assumption}[theorem]{Assumption}
\title[Relative entropy in moderate regime]{Quantitative convergence in relative entropy for a moderately interacting particle system on $\R^d$}
\author[L. Chen]{Li Chen}
\address{University of Mannheim, School of Business Informatics and Mathematics, 
	68131 Mann\-heim, Germany}
\email{chen@math.uni-mannheim.de}
\author[A. Holzinger]{Alexandra Holzinger$^1$}
\address{Mathematical Institute, University of Oxford, Woodstock Road, Oxford, OX2 6GG, England, United Kingdom}
\email[Corresponding Author]{alexandra.holzinger@maths.ox.ac.uk}
\author[X. Huo]{Xiaokai Huo}
\address{Department of Mathematics, Iowa State University, 
	411 Morrill Road, Ames, IA 50014, United States}
\email{xhuo@iastate.edu}
\date{\today}
\thanks{LC has been supported by the DF grant CH955/8-1. AH has been supported by the Advanced Grant Nonlocal-CPD (Nonlocal PDEs for Complex Particle Dynamics: Phase Transitions, Patterns and Synchronization) of the European Research Council Executive Agency (ERC) under the European Union's Horizon 2020 research and innovation programme (grant agreement No. 883363) and under the European Union's Horizon 2020 research and innovation programme, ERC Advanced Grant no. 101018153. Additionally, AH acknowledges partial support from the Austrian Science Fund (FWF), grants P33010 and F65.}
\keywords{Mean-field limit, propagation of chaos, relative entropy, moderate interactions}
\begin{document}
\maketitle
\setcounter{footnote}{1}
\footnotetext{Corresponding author.}
\begin{abstract}
    This article shows how to combine the relative entropy method in \cite{jabin2018quantitative, bresch2019mean} and the regularized $L^2(\R^d)$-estimate in \cite{oeschlager1987fluctuation} to prove a strong propagation of chaos result for the viscous porous medium equation from a moderately interacting particle system in $L^\infty(0,T; L^1(\R^d))$-norm. In the moderate interacting setting, the interacting potential is a smoothed Dirac Delta distribution, however, current results regarding the relative entropy methods for singular potentials do not apply. The propagation of chaos result for the porous media equation holds on $\R^d$ for any dimension $d\geq 1$ and provides a quantitative result where the rate of convergence depends on the moderate scaling parameter and the dimension $d\geq 1$. Additionally, the presented method can be adapted for moderately interacting systems for which the convergence probability in \cite{LP,LCZ} holds -- thus a propagation of chaos result in relative entropy can be obtained for kernels approximating Coulomb potentials.
    \end{abstract}

    \section{Introduction} 
    
    The aim of this article is to derive a quantitative convergence result in \newline $L^\infty(0,T;L^1(\R^d))$-norm for the following viscous porous medium equation 
    \begin{equation}\label{eq:2}
    \partial_t \rb = \frac12 \diver\big((1+\rb)\nabla \rb\big),\mbox{ for }(t,x) \in (0,T)\times \mathbb{R}^d,\quad \rb\big|_{t=0} = \rho_0,\mbox{ in } \mathbb{R}^d,
    \end{equation}
    for $T>0$ from a moderately interacting particle system of mean-field type with algebraic scaling of the moderate interaction parameter. Equation \eqref{eq:2} is considered for arbitrary dimensions $d\geq 1$. The underlying method of this article is a combination of the relative entropy method for mean-field limits recently developed in \cite{jabin2018quantitative, bresch2019mean, serfaty2020mean} for weakly interacting particle systems and $L^2(\R^d)$-norm estimates for the smoothed empirical measure which were originally developed by Oelschl\"ager in \cite{oeschlager1987fluctuation} in order to prove a fluctuation theorem for moderately interacting particles. Hence, this article is also aimed to show a connection between the relative entropy method and convergence results in regularised $L^2(\R^d)$-norm.  
    To the best of the authors knowledge, it is the first time to show a (quantitative) propagation of chaos result by relative entropy estimates in the setting of moderately interacting particle systems. 
    
    It is well-known (e.g. \cite{figalli2008convergence, oelschlager1985law}) that the moderately interacting particle system corresponding to \eqref{eq:2} for $N \in \N$ interacting particles on $\R^d$ reads as
    \begin{eqnarray}
    \label{eq:1}
        &&\de X_{i,t}^{N,\eta} = -\frac{1}{2N} \sum_{j=1,j\neq i}^{N} \nabla V^\eta(X_{i,t}^{N,\eta} -X_{j,t}^{N,\eta} ) \de t +  \de B_t^i, \\
        \nonumber && X_{i,0}^{N,\eta}  = \xi_i,\quad i=1,\ldots,N,
    \end{eqnarray}
    where the interaction kernel $V^\eta \geq 0$ depends on the number of particles $N\in \N$ via the moderate interaction parameter $\eta = \eta(N)$. $(B_t^i)_{i=1}^N$ are independent Brownian motions on $\R^d$ and $(\xi_i)_{i=1}^N$ are assumed to be i.i.d random variables. Concerning the moderate interaction parameter $\eta>0$, we consider an algebraic scaling as in \cite{oelschlager1985law}, i.e. $\eta=N^{-\beta/d}$ for $\beta >0$. The interacting potential $V^\eta$ is assumed to approximate the Dirac distribution for $\eta \to 0$ in the following way: 
    \begin{equation}
    	\label{eq:V1} V^\eta(x)=W^\eta*W^\eta(x),\quad W^\eta(x)=\eta^{-d}W\Big(\frac{x}{\eta}\Big) \mbox{ for }x\in \mathbb{R}^d,
    \end{equation}
    where $W$ is a non-negative radially symmetric function with $\|W\|_{L^1(\R^d)}=1$. Clearly, by setting $V:= W \ast W$ we get  $\|V\|_{L^1(\R^d)}=1$, $\nabla V(0)=\mathbf{0}$ and $V^\eta = \eta^{-d} V(x/\eta)$. Recalling the algebraic connection between $\eta$ and $N$, it is clear that $V^\eta$ forms a smooth approximation of the Dirac distribution for $N \to \infty$ (which implies $\eta \to 0$). Due to the non-negativity of $V$ particle system \eqref{eq:1} describes a repulsive regime in the large population limit.
    
     In the moderate regime the strength of interaction is stronger than in the classical mean-field regime ($O(N^{-1})$) but weaker than in the strong regime ($O(1)$). The interaction potential is scaled such that it approximates a Dirac distribution, which allows to derive local equations in the limit. For a more detailed introduction to moderate regimes, we refer the reader to \cite{oelschlager1985law} and \cite[Chapter II.]{Sznitman91}.
    
    \subsection{State of the art} Mean field limits of large particle systems is an active field of research, hence it is out of the scope of this article to make a full review of it. Instead, we refer to \cite{ChaintronDiez22I,  ChaintronDiez21II, golse2016dynamics, jabin2017mean} and the references therein for the recent developments. In the following we will focus only on the results of moderately interacting systems and on the recently developed relative entropy method for mean-field limits. 
    
    A rigorous proof of a limit for a moderately interacting particle system has been first given by Oelschl\"ager in \cite{oelschlager1985law} to obtain the viscous porous medium equation \eqref{eq:2} under algebraic scaling $\eta=N^{-\beta/d}$ for $\beta \in (0, d/(d+2))$. This result was generalized in \cite{Meleard87} by using a different notion of convergence. The derivation of the porous media equation without viscosity, i.e. without linear diffusion $\frac{1}{2}\Delta \rb$, was shown in \cite{oelschlager1990large} using a deterministic particle system under more restrictive conditions on the dimension and the initial datum $\rho_0$. Both articles \cite{oelschlager1985law,oelschlager1990large} prove propagation of chaos in a `weak' sense, i.e. on the level of empirical measures. 
     In \cite{philipowski2007interacting}, the mean-field limit towards the porous medium equation without viscosity is shown under logarithmic scaling ($\eta=(\log(N)))^{-\alpha}$ for some $\alpha >0$) by using a stochastic particle system similar to \eqref{eq:1} where the diffusion coefficient tends to zero allowing for a broader class of initial data than in \cite{oelschlager1990large}. For fixed diffusion coefficient, the convergence towards the viscous porous media equation is shown in a `strong' sense by coupling methods, whereas the full limit with vanishing viscosity holds in a weak sense. The above results are all concerned with $\Delta \rb^2$ with exponent $2$. For general exponents, the derivation of the porous medium equation from moderate particle systems is given in \cite{figalli2008convergence} under logarithmic scaling of $\eta>0$ by coupling methods using a similar strategy as in \cite{philipowski2007interacting}.
    Changing the sign of the non-linear diffusion term in \eqref{eq:2} to  $-\Delta \rb^2$ leads to a PDE of diffusion-aggregation type. The well-posedness analysis on a bounded domain this PDE has been studied in \cite{chen2021class}.  The derivation of this diffusion-aggregation equation from a moderately interacting particle system was given in \cite{chen2018modeling} by coupling methods but with logarithmic scaling of $\eta>0$. The above approaches on the single species case have recently been extended to multi-species systems with cross-diffusion, see for example \cite{chen2019rigorous,chen2021rigorous}. 
    Concerning deterministic approximating sequences of \eqref{eq:2}, in \cite{oelschlager2001viscous}, Oelschl\"ager showed a convergence result of McKean-Vlasov integro-differential equations towards the viscous porous media equation in the framework of $C^\infty(\R^d)$ solutions. The estimates are similar to those used in Section \ref{sec:intermediate}, however, in order to be able to fill the gap towards the relative entropy method, we provide estimates in $H^s(\R^d)$-norm. Recently, in \cite{Olivera20} and \cite{Olivera20Burgers} the authors developed new quantitative mean-field estimates for classes of moderately interacting particle systems, which do not include the viscous porous media equation, by using a semi-group approach.
    
    In 1991, Oelschl\"ager \cite{oelschlager91} already used energy-based estimates for moderate interacting second order systems. The relative entropy method for mean-field limits was applied in \cite{jabinWang2016} in order to show convergence of the marginals of the $N$-particle distribution in $L^1(\R^d)$-norm in the setting of Vlasov equation. Following this work quantitative mean-field limits with $W^{-1,\infty}$ interaction potentials have been studied in \cite{jabin2018quantitative} as a consequence of relative entropy estimates on the torus $\mathbb{T}^d$.
    Soon after, the method of modulated free energy was introduced in \cite{serfaty2018systems,serfaty2020mean} to study the mean-field behaviour of particle systems with Coulomb-type interaction potentials. A successful combination of the techniques in \cite{serfaty2018systems,serfaty2020mean} and \cite{jabin2018quantitative} made it possible to obtain a propagation of chaos result in $L^1(\mathbb{T}^d)$-norm for mean-field systems with logarithmic interaction potentials in arbitrary dimensions which includes the Keller-Segel system in dimension $2$, see \cite{bresch2019mean}.
    Recently, in \cite{Lacker23} Lacker developed new techniques based on the well-known BBGKY hierarchy in order to derive improved rates for the convergence of the $k$-th marginals of the joint law of interacting particle systems of mean-field type. The techniques in \cite{Lacker23} deal with a class of weakly interacting particles. An extension of this technique for moderate interaction is still missing in the literature.
    
    \subsection{Aim and structure of the article} 
    In this article, we aim to derive the viscous porous media equation \eqref{eq:2} from the mean-field type moderate interacting particle system \eqref{eq:1} by proving a quantitative propagation of chaos result for the marginals of the $N$-particle distribution in $L^\infty(0,T;L^1(\R^d))$-norm for any $T>0$. Inspired by \cite{jabin2018quantitative, bresch2019mean}, we use relative entropy estimates and the Csisz\'ar-Kullback-Pinsker inequality to derive the desired estimates. In the moderate interacting setting, the interacting potential is a smoothed Dirac Delta distribution, therefore, the existing results concerning relative entropy estimates in mean-field settings can not be applied. 
    
    We introduce an intermediate nonlocal McKean-Vlasov system, and split the proof in two parts, including estimates between this intermediate system and \eqref{eq:2}, as well as a propagation of chaos result from particle system \eqref{eq:1} to the intermediate McKean-Vlasov system. Both parts include estimates in relative entropy. In order to derive those estimates, we apply a regularised $L^2(\R^d)$ estimate which was originally developed for the study of fluctuations around the mean-field limit in \cite{oeschlager1987fluctuation}. This avoids the application of technical challenging large deviation estimates as in \cite{bresch2019mean,jabin2017mean} and allows us to give a quantitative mean-field convergence result for the marginals of the $N$-particle distribution in the moderate setting. The convergence rate depends on the mean-field scaling parameter as well as on the dimension $d\geq 1$. One of the technical difficulties lies in the fact that the key estimate of \cite{oeschlager1987fluctuation} has not been given to the intermediate nonlocal PDE, but only to an asymptotic approximation. Therefore, further justification is provided in this article. The impact of this article is threefold: First, it proves quantitative mean-field estimates for the marginals in the case that the interaction potential $V^\eta$ approximates the Delta distribution - a singular interaction potential which was not included in literature for relative entropy estimates. Second, our main result works not only for a periodic domain (such as the results in \cite{jabin2018quantitative,bresch2019mean}) but also for the whole space. Finally, as a by product we also show that the proposed method of using the relative entropy method can be directly applied for the (both repulsive and attractive) Coulomb interaction case, which is another novelty of this paper.
    
     The first main theorem (Theorem \ref{mainthm}) shows a quantitative propagation of chaos result for any $k$-th marginal of the particle distribution of \eqref{eq:1} in $L^\infty(0,T; L^1(\R^{dk}))$-norm approximating the viscous porous media equation. The moderate interaction parameter is assumed to be scaled algebraically with respect to the number of particles $N$, i.e. $\eta =N^{-\beta/d}$ for some $\beta >0$. The proof is done by connecting the relative entropy method introduced in \cite{jabinWang2016} with the regularized $L^2(\R^d)$-estimate given in \cite{oeschlager1987fluctuation}. Eventually, Theorem \ref{thm.coulomb} shows a quantitative propagation of chaos result in $L^\infty(0,T; L^1(\R^d))$-norm for a particle system with an interaction potential approximating a Coulomb kernel.
    
    The article is organized as follows. In the following section (Section \ref{sec:ideas}) we present our main result (Theorem \ref{mainthm}) and give a guideline on how the relative entropy method can be used in order to derive our main result in the moderate setting. In Section \ref{sec:intermediate} we provide existence results and error estimates concerning the intermediate non-local equation \eqref{eq:3} which converges towards \eqref{eq:2} for $\eta \to 0$. Additionally, we recall error estimates from \cite{oeschlager1987fluctuation} concerning an asymptotic approximation $\rho_{N,L}$ of the solution to the intermediate McKean-Vlasov equation. The relative entropy estimate between the solution to this intermediate problem and that of the moderately interacting particle system \eqref{eq:1} is derived in Lemma \ref{lm.N}, Section \ref{sec:main_proof}. The proof of Theorem \ref{mainthm} is given in the end of this section. An extra discussion in deriving the same result for Coulomb interaction case is given in Section \ref{sec:Coulomb}.

    \section{Main result and ideas}\label{sec:ideas}
    The Kolmogorov forward equation of \eqref{eq:1} reads as
    \begin{equation}\label{eq:li1}
    \partial_t \rho_N^\eta = \frac12 \sum_{i=1}^N \Delta_{x_i} \rho_N^\eta + \sum_{i=1}^N \diver_{x_i}\Big(\rho_N^\eta \frac{1}{2N} \sum_{\substack{j=1}}^N\nabla V^\eta(x_i-x_j)\Big),\quad \rho_N^\eta\big|_{t=0}=\rho_0^{\otimes N},
    \end{equation}
    where $\rho^\eta_N=\rho^\eta_N(t,x_1,\ldots,x_N)$ is the joint law of the particles $(X_{i,t}^{N,\eta})^N_{i=1}$ which solve \eqref{eq:1}. The above equation is satisfied in the sense of distribution by using It\^o's formula.  
    The initial data is normalized in the following sense 
    \begin{align*} 
    \int_{\mathbb{R}^{dN}} \rho_0^{\otimes N} \de x_{1}\ldots \de x_N =\Big(\int_{\mathbb{R}^{d}} \rho_0 \de x\Big)^N=1.
    \end{align*}
    Since for any given $\eta>0$, \eqref{eq:li1} is a linear parabolic equation with smooth coefficients, one directly obtains existence and uniqueness of a classical solution with appropriate assumptions of the initial data $\rho_0$.
    
    We denote with $\rho_{N,k}^\eta$ the $k$-th marginal of $\rho^\eta_N$, for $k=1,2,\ldots,N$, which is given by 
    \begin{align*}
       \rho_{N,k}^\eta  = \int_{\mathbb{R}^{(N-k)d}} \rho_N^\eta(t,x_1,\ldots,x_N) \de x_{k+1}\ldots \de x_N.
    \end{align*}

    Since the key estimate obtained in \cite[Theorem 1]{oeschlager1987fluctuation} will be directly cited in this article, the following assumptions posed in \cite{oeschlager1987fluctuation} are also needed for our main result.
    \begin{assumption}  \label{assV} Concerning the interaction potential $V^\eta = W^\eta \ast W^\eta$ and $W$ given in \eqref{eq:V1} we impose the following assumptions:
    	\begin{enumerate}
    		\item[(i)] \textbf{Scaling:} $\eta=N^{-\beta/d}$ and $\beta\in (0,\frac{d}{2(d+2)})$. 
    		\item[(ii)] \textbf{Probability density function with finite moments:} \\ $W$ is a non-negative radially symmetric function with norm \ \\ $\Vert W \Vert_{L^1(\R^d)}=1$.
    		Additionally, $$\displaystyle\int_{\R^d}|x|^lW(x)dx <\infty,$$ for all $l\in \mathbb{N}$. 
    		\item[(iii)] \textbf{Decay of the Fourier transform:} Let $\mathcal{F}(W)$ denote the Fourier transform of $W$.\\We assume that there exists a constant $C>0$ such that $|\mathcal{F}(W)(\lambda)|\leq C e^{-C|\lambda|}$ for every $\lambda \in \R$ and that for any multi-index $\alpha$ in $\R^d$, there exists a constant $C(\alpha)>0$ such that
    		$$ |D^\alpha_\lambda \mathcal{F}(W)(\lambda)|\leq C(\alpha) (1+|\lambda|^{|\alpha|})|\mathcal{F}(W)(\lambda)|.$$
    		
    	\end{enumerate}	
    \end{assumption}

    Assumption \ref{assV} holds for instance if $W$ is Gaussian, \cite{oeschlager1987fluctuation}.

    \begin{remark}
    The setting of moderately interacting particles for the gradient system \eqref{eq:1} allows $\beta \in (0, \frac{d}{(d+2)})$ if one is purely interested in the mean-field limit of the empirical measure, see \cite{oelschlager1985law}. We use the more restrictive scaling $\beta \in (0, \frac{d}{2(d+2)})$ in order to derive a quantitative estimate for the marginals of the particle distribution of \eqref{eq:1}.
    \end{remark}
    
    \begin{assumption}\label{assInit}For $m\in \N$, let
    		$$C^{m}_{1,\infty}:=\{\rho~| D^\alpha\rho\in L^1(\R^d)\cap L^\infty(\R^d), \forall |\alpha|\leq m\}.$$ The initial data fulfils the following assumption:  Let $(\xi_i)_{i=1}^N $ be given i.i.d. random variables with common law $\mu_0$ and probability density $\rho_0 \in C^{\infty}_{1,\infty}(\R^d)$.
    \end{assumption}
    We need Assumption \ref{assInit} in order to apply results obtained in \cite{oeschlager1987fluctuation} (especially Lemma \ref{lemL2}) under strong regularity conditions on $\rho_0$. Estimates obtained in Section \ref{sec:intermediate} on the PDE level will hold under weaker assumptions on the initial data.
    
    The main result of this article is the following quantitative propagation of chaos result for any $k$-th marginal $\rho^\eta_{N,k}$ of the joint law $\rho_{N}^\eta$ of the particles fulfilling \eqref{eq:1}  in $L^\infty(0,T;L^1(\R^{dk}))$-norm.
    \begin{theorem}[Quantitative propagation of chaos result in $L^1(\R^{dk})$--norm]\label{mainthm}
        Let Assumptions \ref{assV} and \ref{assInit} hold. Then there exist  $N_0\in\mathbb{N}$ so that for all $ N\geq N_0$ and $\varepsilon\in (0, 1/4-\beta(d+2)/2d)$ it holds that 
        \begin{align}\label{eq:result}
            \|\rho_{N,k}^\eta - \rb^{\otimes k}\|_{L^\infty(0,T;L^1(\mathbb{R}^{dk}))} \le \frac{C k^{1/2}}{N^{\min\{\frac14+\varepsilon,\frac{2\beta}{d}\}}} \quad \forall k>0,
        \end{align}
    where $C>0$ is a constant independent of $N$ and $k$.
    \end{theorem}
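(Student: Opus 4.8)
The plan is to interpolate between the particle system \eqref{eq:1} and the target equation \eqref{eq:2} by means of the intermediate nonlocal McKean--Vlasov equation \eqref{eq:3},
\[
\partial_t\rho^\eta=\tfrac12\Delta\rho^\eta+\tfrac12\diver\!\big(\rho^\eta\,\nabla(V^\eta*\rho^\eta)\big),\qquad \rho^\eta|_{t=0}=\rho_0 .
\]
Since $\|V^\eta\|_{L^1(\R^d)}=1$, one has $V^\eta*\rho^\eta\to\rho^\eta$ as $\eta\to0$, so \eqref{eq:3} formally degenerates to $\tfrac12\diver((1+\rb)\nabla\rb)$. Writing $\rho^{\eta,\otimes k}$ for the $k$-fold tensor power, the triangle inequality
\[
\|\rho_{N,k}^\eta-\rb^{\otimes k}\|_{L^1(\R^{dk})}\le\|\rho_{N,k}^\eta-\rho^{\eta,\otimes k}\|_{L^1(\R^{dk})}+\|\rho^{\eta,\otimes k}-\rb^{\otimes k}\|_{L^1(\R^{dk})}
\]
splits the task into two relative-entropy estimates, each followed by the Csisz\'ar--Kullback--Pinsker inequality. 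The mechanism that turns the naive factor $k$ into $k^{1/2}$ is the sub-additivity of the relative entropy of the exchangeable law $\rho_N^\eta$, i.e. $H(\rho_{N,k}^\eta\,|\,\rho^{\eta,\otimes k})\le\tfrac kN\,H(\rho_N^\eta\,|\,\rho^{\eta,\otimes N})$ together with $H(\rho^{\eta,\otimes k}\,|\,\rb^{\otimes k})=k\,H(\rho^\eta\,|\,\rb)$, after which $\|\cdot\|_{L^1}\le C(kH)^{1/2}$.

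\emph{Step 1 (PDE level; Section \ref{sec:intermediate}).} First I would prove well-posedness of \eqref{eq:3} with $\eta$-uniform bounds for $\rho^\eta$ in $H^s(\R^d)$ on $[0,T]$, using parabolic smoothing and the structure $V^\eta\ge0$, $\nabla V^\eta(0)=\mathbf{0}$. A Taylor expansion of $V^\eta*\rho^\eta-\rho^\eta$ based on $\int_{\R^d}V^\eta=1$ and the vanishing first moment of $V$ (radial symmetry) gives $\|\nabla(V^\eta*\rho^\eta)-\nabla\rho^\eta\|_{L^2(\R^d)}\le C\eta^2$; inserting this into a relative-entropy stability estimate for $H(\rho^\eta\,|\,\rb)$ in the spirit of \cite{jabinWang2016} (Cauchy--Schwarz to absorb the cross term into the entropy dissipation, then Gr\"onwall) yields $\sup_{[0,T]}H(\rho^\eta\,|\,\rb)\le C\eta^4=CN^{-4\beta/d}$, whence $\|\rho^{\eta,\otimes k}-\rb^{\otimes k}\|_{L^1}\le Ck^{1/2}\eta^2=Ck^{1/2}N^{-2\beta/d}$, which is the second contribution to \eqref{eq:result}. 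In this step I would also recall from \cite{oeschlager1987fluctuation} the regularised $L^2(\R^d)$ fluctuation estimate for the asymptotic approximation $\rho_{N,L}$ (Lemma \ref{lemL2}) and supplement it with a deterministic comparison of $\rho_{N,L}$ with the actual solution $\rho^\eta$ of \eqref{eq:3} in a strong enough Sobolev norm.

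\emph{Step 2 (particle level; Lemma \ref{lm.N}).} Let $H_N(t)=\tfrac1N\int_{\R^{dN}}\rho_N^\eta\log\!\big(\rho_N^\eta/\rho^{\eta,\otimes N}\big)\de x$. Differentiating in time, using \eqref{eq:li1} and \eqref{eq:3} and integrating by parts, one obtains the nonpositive Fisher-information dissipation $-\tfrac1{2N}\int\rho_N^\eta\big|\nabla\log(\rho_N^\eta/\rho^{\eta,\otimes N})\big|^2\de x$ plus the error term
\[
\frac{1}{2N}\sum_{i=1}^N\int_{\R^{dN}}\rho_N^\eta\,\big(\nabla V^\eta*(\mu_N-\rho^\eta)\big)(x_i)\cdot\nabla_{x_i}\log\frac{\rho_N^\eta}{\rho^{\eta,\otimes N}}\,\de x ,
\]
where $\mu_N=\tfrac1N\sum_{j=1}^N\delta_{x_j}$ and the self-interaction term drops because $\nabla V^\eta(0)=\mathbf{0}$. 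By Cauchy--Schwarz and Young, half of the score is absorbed into the dissipation and one is left with $\tfrac1{4N}\sum_i\int\rho_N^\eta\big|(\nabla V^\eta*(\mu_N-\rho^\eta))(x_i)\big|^2\de x$; this is the step where the regularised $L^2$ estimate of \cite{oeschlager1987fluctuation} replaces the large-deviation argument of \cite{jabin2018quantitative,bresch2019mean}. Writing $\nabla V^\eta=\nabla W^\eta*W^\eta$ and applying Young's convolution inequality bounds the last quantity by $\|\nabla W^\eta\|_{L^2(\R^d)}^2\|W^\eta*(\mu_N-\rho^\eta)\|_{L^2(\R^d)}^2\le C\eta^{-(d+2)}\|W^\eta*(\mu_N-\rho^\eta)\|_{L^2(\R^d)}^2$, and splitting $\mu_N-\rho^\eta=(\mu_N-\rho_{N,L})+(\rho_{N,L}-\rho^\eta)$ reduces the estimate to Lemma \ref{lemL2} and the comparison of Step 1. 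Integrating in $t$ and invoking Gr\"onwall then gives $H_N(T)\le CN^{-1/2-2\varepsilon}$ for $\varepsilon$ in the stated range, and Csisz\'ar--Kullback--Pinsker with sub-additivity yields $\|\rho_{N,k}^\eta-\rho^{\eta,\otimes k}\|_{L^1}\le C(kH_N)^{1/2}\le Ck^{1/2}N^{-1/4-\varepsilon}$. Adding the two steps gives \eqref{eq:result} with rate $\min\{\tfrac14+\varepsilon,\tfrac{2\beta}{d}\}$.

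\emph{Main obstacle.} The hard part is exactly the point flagged in the introduction: Oelschl\"ager's $L^2$ estimate is available only for the asymptotic expansion $\rho_{N,L}$, not for the solution $\rho^\eta$ of the nonlocal equation \eqref{eq:3}, so one must control $\rho_{N,L}-\rho^\eta$ in a norm strong enough for the convolution--Young argument above while keeping explicit track of how every constant depends on $\eta=N^{-\beta/d}$. The error term in the entropy inequality carries singular factors of order up to $\eta^{-(d+2)}\sim N^{\beta(d+2)/d}$, which have to be overcome by the $N^{-1}$ gain from the law of large numbers built into Lemma \ref{lemL2}; the balance $\eta^{-(d+2)}N^{-1}=N^{\beta(d+2)/d-1}\ll N^{-1/2-2\varepsilon}$ is precisely what forces the scaling restriction $\beta<\tfrac{d}{2(d+2)}$ of Assumption \ref{assV} and the admissible range $\varepsilon\in(0,\tfrac14-\tfrac{\beta(d+2)}{2d})$. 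A secondary technicality, absent on the torus in \cite{jabin2018quantitative,bresch2019mean}, is ensuring finiteness of the relative entropy and Fisher information on $\R^{dN}$ and justifying the integrations by parts; this follows from the smooth, rapidly decaying data of Assumption \ref{assInit} and parabolic regularity of \eqref{eq:li1} and \eqref{eq:3}.
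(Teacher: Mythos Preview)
Your overall architecture matches the paper's exactly: split via the intermediate equation \eqref{eq:3}, control $\mathcal H_1(\rb^\eta|\rb)\le C\eta^4$ by a Taylor-expansion/Gr\"onwall argument, derive the entropy inequality \eqref{ineq:part_H_N} for $\mathcal H(\rho_N^\eta|\rb_N^\eta)$, and close it with Oelschl\"ager's estimate (Lemma~\ref{lemL2}) and the comparison $\rho_{N,L}\leftrightarrow\rb^\eta$ from Lemma~\ref{lem.E}. The sub-additivity/CKP step is also identical.

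There is, however, a real quantitative gap in Step~2. After Young's inequality the key remainder is exactly
\[
\frac14\int_0^t\mathbb E\Big(\big\langle\mu_N,\,|\nabla V^\eta*(\mu_N-\rb^\eta)|^2\big\rangle\Big)\,\de s,
\]
and this is precisely the \emph{third} quantity controlled by Lemma~\ref{lemL2}, directly at the rate $N^{-1/2-\eps'}$ with $\eps'<\tfrac12-\beta\tfrac{d+2}{d}$. The paper uses this term verbatim; no $\eta$-dependent prefactor ever appears, and one obtains $\mathcal H(\rho_N^\eta|\rb_N^\eta)\le CN^{-1/2-\eps'}+C\eta^4$, i.e.\ the stated range $\varepsilon<\tfrac14-\tfrac{\beta(d+2)}{2d}$ after halving. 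Your route instead bounds $|\nabla V^\eta*(\mu_N-\rb^\eta)|$ pointwise by $\|\nabla W^\eta\|_{L^2}\,\|W^\eta*(\mu_N-\rb^\eta)\|_{L^2}$ and then invokes the \emph{first} quantity in Lemma~\ref{lemL2}. This costs a factor $\|\nabla W^\eta\|_{L^2}^2\sim\eta^{-(d+2)}$ and, since Lemma~\ref{lemL2} only gives $\|W^\eta*(\mu_N-\rho_{N,L})\|_{L^2}^2\le CN^{-1/2-\eps'}$ (not $N^{-1}$ as you write in the ``Main obstacle'' paragraph), you end up with $\mathcal H(\rho_N^\eta|\rb_N^\eta)\le CN^{\beta(d+2)/d-1/2-\eps'}$. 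Optimising over $\eps'$ this yields only $\varepsilon<\tfrac14-\tfrac{\beta(d+2)}{d}$, strictly smaller than the theorem's range. Your balance computation $\eta^{-(d+2)}N^{-1}\ll N^{-1/2-2\varepsilon}$ happens to reproduce the correct range, but only because it mis-attributes an $N^{-1}$ decay to Lemma~\ref{lemL2}.

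The fix is simply to drop the Young-convolution detour and apply the third estimate of Lemma~\ref{lemL2} directly, together with the $L^2(0,T;H^{s+1})$ bound $\|\rho_{N,L}-\rb^\eta\|\le C\eta^2$ from Lemma~\ref{lem.E} for the deterministic remainder. Two minor side remarks: no Gr\"onwall is needed in Step~2 (after absorption there is no $\mathcal H$ on the right-hand side), and the $\eta^4$ from the $\rho_{N,L}\leftrightarrow\rb^\eta$ comparison must be kept in the bound for $\mathcal H(\rho_N^\eta|\rb_N^\eta)$---this is what produces the $\min$ with $4\beta/d$.
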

    
    \subsection{Discussion of the convergence rate} Since $0<\eps<1/4-\beta(d+2)/2d < 1/4$ in Theorem \ref{mainthm}, the convergence rate of \eqref{eq:result} is smaller than $1/2$. For dimension $d=1$, a simple calculation shows that the maximal rate of convergence lies between $1/4$ and $1/3$ and becomes arbitrarily small if $\beta \sim 0$. For $d\geq 2$, it holds that $\min\{\frac{1}{4}+ \eps, \frac{2 \beta}{d}\} =  \frac{2\beta}{d}$since for $d\geq 2$
    $$\frac{2 \beta}{d} \leq \frac{1}{4}~~~\text{ due to the choice of }\beta \in \Big(0, \frac{d}{2(d+2)}\Big).$$ It shows that for $d\geq 2$ the convergence rate lies between $(0,\frac{1}{d+2})$ and becomes arbitrary slow for $\beta \sim 0$. This corresponds to the observation made e.g. in \cite{oeschlager1987fluctuation}, that due to the change of the regime from \textit{moderately} towards \textit{weakly interacting particles} for $\beta\to 0$ (which leads to a non-local PDE in the limit), the mean-field convergence towards the limiting local PDE \eqref{eq:2} is very slow for small values of $\beta>0$. 
    
    The proof of Theorem \ref{mainthm} is done by exploiting an asymptotic approximating sequence $\rho_{N,L}$ of the intermediate PDE problem introduced in \cite{oeschlager1987fluctuation} and implies that the convergence rate of any $k$-th marginal $\rho^\eta_{N,k}$ towards $\rho_{N,L}^{\otimes k}$ in $L^\infty(0,T; L^1(\R^{dk}))$ is smaller or equal $1/4+\eps$, i.e.
    \begin{align*}
     \|\rho_{N,k}^\eta - (\rho_{N,L})^{\otimes k}\|_{L^\infty(0,T;L^1(\mathbb{R}^{dk}))} \le CN^{-1/4-\eps}.
    \end{align*} The minimum condition in \eqref{eq:result} in the convergence rate originates from the PDE error estimates between the approximated solution $\rho_{N,L}$ and the PDE solution $\rb$ to \eqref{eq:2} in $H^s(\R^d)$-norm, i.e. form the fact that
    \begin{align*}
    \Vert \rb - \rho_{N,L} \Vert_{L^\infty(0,T; H^s(\R^d))} \leq CN^{-2\beta/ d} (= C\eta^2),
    \end{align*}
    see Lemma \ref{lem.E} in Section \ref{sec:intermediate}.

    \begin{remark}[Propagation of chaos]
    We remark that Theorem \ref{mainthm} implies a quantitative propagation of chaos result in the \textit{classical (weak) sense}, i.e. that for every $t\in(0,T)$ the joint law $\rho_{N}^\eta(t)$ of the particle system is $\rb(t)$-chaotic. We refer to \cite{Sznitman91} and \cite{HaurayMischler14} for the exact definition of propagation of chaos, chaoticity and their implications.
    \end{remark}
    \begin{remark}[Convergence on the whole space $\R^d$]
    In the seminal work \cite{jabin2018quantitative}, the authors were able to derive a quantitative convergence result by relative entropy estimates for interaction kernels of $W^{-1,\infty}$-type on the torus in the weak interaction regime. The result requires that the solution to the limiting PDE is a strictly positive function - an assumption which is not trivially fulfilled if the domain is unbounded (see \cite[Remark 3]{jabin2018quantitative}). In this article, applying an estimate in $L^2(\R^d)$-norm for the smoothed empirical measure from \cite{oeschlager1987fluctuation} allows us to provide a result which holds on the whole space and does not require positivity of solutions to \eqref{eq:2}. 
    \end{remark}
    
    \subsection{Main idea of the proof of Theorem \ref{mainthm}} Before we state the main idea of our main theorem, we need to recall some definitions concerning the relative entropy. 
    For $m \in \N$ and two probability density functions $p,q$ on $\R^{dm}$, we define the \textit{(non-normalized) relative entropy} between $p$ and $q$ as
    \begin{align*}
    &\mathcal{H}_m(p|q) :=\int_{\mathbb{R}^{dm}}p\log \frac{p}{q} \de x_1\ldots \de x_m,
    \end{align*}
    and by 
    \begin{align}\label{total_relative_entropy_def}
    \mathcal{H}(\rho_{N}^\eta| q_N ) (t):=\frac{1}{N} \mathcal{H}_N(\rho_{N}^\eta|q_N ) (t) = \frac{1}{N}\int_{\mathbb{R}^{dN}}\rho_{N}^\eta(t)\log \frac{\rho_{N}^\eta(t)}{q_N(t)} \de x_1\ldots \de x_N,
    \end{align}
    we denote the \textit{total relative entropy} of the joint law of particle system \eqref{eq:1} with respect to any probability density function $q_N(t)$ on $\R^{dN}$ at time $t\geq 0$.
    
    The proof of Theorem \ref{mainthm} is given by a combination of the relative entropy method and an estimate derived in \cite{oeschlager1987fluctuation} in the framework of fluctuation theory. Following the strategy introduced in \cite{jabin2018quantitative}, the Csisz\'ar-Kullback-Pinsker inequality (see Lemma \ref{lem:ckp} in the appendix) as well as the fact that the relative entropy of the $k$-th marginal can be bounded by the total relative entropy (see Lemma \ref{lem:superadd} in the appendix) shows that
    \begin{align}\label{idea:CKP}
    \sup_{0<t<T}\|\rho_{N,k}^\eta(t) - \rb^{\otimes k}(t)\|_{L^1(\mathbb{R}^{dk})}^2 &\le \sup_{0<t<T}2 \mathcal{H}_k(\rho_{N,k}^\eta|\rb^{\otimes k} ) (t) \nonumber \\ &\leq\sup_{0<t<T}  \frac{4k}{N} \mathcal{H}_N(\rho_{N}^\eta|\rb^{\otimes N} ) (t).
    \end{align}
    Inequality \eqref{idea:CKP} indicates that our strategy is to investigate quantitative estimates of the total relative entropy between the joint law $\rho_N^\eta$ of the coupled particle system \eqref{eq:1} and the factorized law $\rb^{\otimes N}$ in order to derive the desired result.  \\
    
    The estimate for the relative entropy between $\rho_N^\eta$ and $\rb^{\otimes N}$ is done in two steps: For both steps, we need to introduce an intermediate particle system of McKean-Vlasov type: For fixed $\eta >0$, let $(\bar X_{i,t}^{\eta})_{i=1}^N$ be the solution to the following (uncoupled) system of SDEs
     \begin{eqnarray}
    \label{eq:interPar}
    &&\de\bar X_{i,t}^{\eta} =- \frac12 \nabla V^\eta*\rb^\eta(\bar X_{i,t}^{\eta}) \de t +  \de B_t^i, \\
    \nonumber && \bar X_{i,0}^{\eta}  = \xi_i,\quad i=1,\ldots,N,
    \end{eqnarray}
    where $\rb^\eta(\cdot,t)$ is the density function of the i.i.d. random processes \newline $\bar X_{1,t}^{\eta},\ldots, \bar X_{N,t}^{\eta}$, and $(\xi_i)_{i=1}^N$ is the same initial condition as in \eqref{eq:1}. By using It\^o's formula, the density function $\rb^\eta$ fulfils the so-called intermediate nonlocal problem, namely
    \begin{equation}\label{eq:3}
    \partial_t \rb^\eta = \frac12 \Delta \rb^\eta + \frac12 \diver(\rb^\eta \nabla V^\eta * \rb^\eta), \qquad
    \rb^\eta (0) = \rho_0,
    \end{equation}
    in the very weak sense for any initial condition $\rho_0 \in H^s(\R^d)$, see e.g. \cite[Lemma 10]{chen2019rigorous} for a rigorous justification.
    
    We split our proof in estimates between the joint law $\rb_N^\eta: =(\rb^\eta)^{\otimes N}$ of the intermediate system \eqref{eq:interPar} and $(\rb)^{\otimes N}$ as well as between the joint law $\rho^\eta_{N}$ of the interacting particle system \eqref{eq:1} and $ (\rb^\eta)^{\otimes N}$. 
    \begin{enumerate}
    \item First, we show that the solution $\rb^\eta$ to \eqref{eq:3} and the solution $\rb$ to \eqref{eq:1} satisfy the relative entropy estimate
    \begin{align*}
    \mathcal{H}_1(\rb^\eta|\rb) \leq C\eta^4 = CN^{-4\beta/d}.
    \end{align*}
    Since particles $\bar{X}_{i,t}^\eta(t)$ associated with \eqref{eq:3}  as well as particles associated with \eqref{eq:2} are already independent, their joint distribution is a factorized function. Hence, Lemma \ref{lem:superadd} as well as the fact that the appearing probability distributions are factorized shows
    \begin{align*}
    \mathcal{H}_k(\rb^\eta_{N,k}| \rb^{\otimes k }) = \mathcal{H}_k((\rb^\eta)^{\otimes k}| \rb^{\otimes k }) \leq k \mathcal{H}(\rb^\eta_{N}| \rb^{\otimes N }) &= k\mathcal{H}_1(\rb^\eta|\rb)\\
    &\leq CkN^{-4\beta/d},
    \end{align*}
    where $\rb^\eta_{N,k}$ denotes the $k$-th marginal of $\rb^\eta_N$, see Section \ref{sec:intermediate} for details.
    
    \item 
    In the second step, we show a relative entropy estimate between the joint law $\rho_N^\eta$ and the chaotic law $\rb_N^\eta: = (\rb^\eta)^{\otimes N}$, where we remark that $\rb^\eta$ still depends on $\eta>0$ and hence on the number of particles $N$.
    
    \noindent Let for any time $t>0 $ \begin{align}\label{emp_m}
    \mu_N(t) := \frac{1}{N} \sum_{i=1}^N \delta_{X_{i,t}^{N, \eta}(t)} 
    \end{align} denote the empirical measure with respect to particle system \eqref{eq:2}. We show that the total relative entropy between $\rho_{N}^\eta$ and the intermediate joint law $\rb^\eta_N$ fulfils the following inequality
    \begin{align}\label{idea.key_estimate}
    	\hspace{0.9cm}&\mathcal{H}(\rho^\eta_N|\rb^\eta_N)(t)+\frac{1}{4N} \int^t_0\int_{\R^{dN}}\sum_{i=1}^N \left|\nabla_{x_i} \log \frac{\rho^\eta_N}{\rb^\eta_N}\right|^2 \rho^\eta_N \de x_1\ldots \de x_N \de s \nonumber\\
    	&\leq  \frac{1}{2}\int_{0}^t\mathbb{E}\Big(\Big\langle \mu_N, \big|\nabla V^\eta \ast (\mu_N - \rho_{N,L})\big|^2 \Big\rangle\Big) \de s \nonumber \\
    	&+ \frac{1}{2}\int_{0}^t\mathbb{E}\Big(\Big\langle \mu_N, \big|\nabla V^\eta \ast (\rho_{N,L} - \rb^\eta)\big|^2 \Big\rangle\Big) \de s,
    \end{align}
    where $\rho_{N,L}$ denotes an approximating sequence introduced in \cite{oeschlager1987fluctuation}, see Lemma \ref{lemmassymp}.
    An application of \cite{oeschlager1987fluctuation} (see Lemma \ref{lemL2}) shows that for small $\eps>0$
    \begin{align}\label{idea.estimate_pNL}
    \int_{0}^t\mathbb{E}\Big(\Big\langle \mu_N, \big|\nabla V^\eta \ast (\mu_N - \rho_{N,L})\big|^2 \Big\rangle\Big) \de s \leq CN^{-1/2-\eps},
    \end{align}
    which by exploiting additional error estimates between $\rho_{N,L}$ and $\rb_N^\eta$ leads to
    \begin{align*}
    \mathcal{H}(\rho^\eta_N|\rb^\eta_N)(t) \leq C(N^{-1/2-\eps} + N^{-4\beta/d}) \leq \frac C {N^{\min\{1/2 + \eps, 4\beta/d \}}}.
    \end{align*}
    We remark that \eqref{idea.key_estimate} is the key estimate of this article and it shows clearly that by applying the result in \cite{oeschlager1987fluctuation} (which was shown in order to derive a fluctuation result) one does not need to prove the technically challenging large deviation principle of \cite{jabin2017mean,jabinWang2016}. Finally, with \eqref{idea:CKP}, we find that
    \begin{align*}
    \sup_{0<t<T}\|\rho_{N,k}^\eta(t) - (\rb^\eta)^{\otimes k}(t)\|_{L^1(\mathbb{R}^{dk})} &\leq\sup_{0<t<T} Ck^{1/2} \sqrt{\mathcal{H}(\rho^\eta_N|\rb^\eta_N)(t)}\\&\leq \frac {Ck^{1/2}} {N^{\min\{1/4 + \eps, 2\beta/d \}}}.
    \end{align*}For details we refer to reader to Section \ref{sec:main_proof}.

    \end{enumerate}
    
    Triangle inequality and the relative entropy estimates derived in Step 1 and 2, lead to \eqref{eq:result}, which finishes the proof.
    \begin{remark}
    Inequality \eqref{idea.estimate_pNL} suggests that the convergence rate in relative entropy between $\rho_{N}^\eta$ and $\rho_{N,L}$ is of order $N^{-1/2- \eps}$, which fits the observation of Oelschl\"ager in \cite{oeschlager1987fluctuation} where a fluctuation theorem is shown for $\sqrt{N}(\mu_N - \rho_{N,L})$. 
    \end{remark}

    \section{Intermediate nonlocal problem and PDE error estimates} \label{sec:intermediate}
    In this section we prove a well-posedness result and error estimates concerning equation \eqref{eq:3} which is an nonlocal approximation of \eqref{eq:2} for $\eta \to 0$. We provide a convergence result in a suitable Bochner space as well as with respect to the relative entropy $\mathcal{H}_1$. Additionally, an asymptotic approximation $\rho_{N,L}$ of the nonlocal problem \eqref{eq:3} obtained in \cite{oeschlager1987fluctuation} is recalled in this section. Furthermore we will give concrete estimates between the solution of \eqref{eq:3} and the solution $\rho_{N,L}$ to the asymptotic equation. 
    
    In the beginning of this section, for the reader's convenience we recall equation \eqref{eq:3}, which is an intermediate nonlocal equation of mean-field type 
    \begin{equation*}
    \partial_t \rb^\eta = \frac12 \Delta \rb^\eta + \frac12 \diver(\rb^\eta \nabla V^\eta * \rb^\eta), \qquad
    \rb^\eta (0) = \rho_0.
    \end{equation*}
    It can be interpreted as mean-field limit of particle system \eqref{eq:1} for fixed $\eta>0$ in the weak interaction regime.
    This nonlocal diffusion equation cannot be directly solved by classical theory for parabolic partial differential equations. We remark that well-posedness of equation \eqref{eq:3} has been studied in \cite{oelschlager2001viscous}, however, the results can not be directly applied for the framework of the relative entropy method. Therefore, for completeness we provide suitable estimates in this section. \\
    
    First, we start with the following lemma from \cite[Lemma 1]{oeschlager1987fluctuation} which gives an asymptotic approximation of the above intermediate problem as well as well-posedness for the viscous porous media equation:
    \begin{lemma}[Well-posedness of \eqref{eq:2} and asymptotic estimate, \cite{oeschlager1987fluctuation}]\label{lemmassymp}
    	Let Assumption \ref{assInit} holds, then for any $T>0$, \eqref{eq:2} has a unique solution $\rho\in C^{\infty}_{1,\infty}(\R^d\times [0,T])$ and there exists $\rho_l, R_{N,l}\in C^\infty_{1,\infty}(\R^d\times [0,T])$, $l=1,\cdots,L$ with
    	\begin{equation}
    	\sup_{N\in\mathbb{N}}(\|R_{N,l}\|_{L^1(\R^d\times [0,T])}+\|R_{N,l}\|_{L^\infty(\R^d\times [0,T])})\leq C(l)\eta^{2l+2}=\frac{C(l)}{N^{2\beta (l+1)/d}} 
    	\end{equation}
    	such that 
    	\begin{equation}\label{RNL}
    	\rho_{N,L}=\rb+ \displaystyle\sum^L_{l=1}\eta^{2l} \rho_l=\rb + \displaystyle\sum^L_{l=1}N^{-2\beta l/d} \rho_l
    	\end{equation}
    	solves
    	\begin{equation}\label{eq:L}
    	\partial_t \rho_{N,L} = \frac12 \Delta \rho_{N,L} + \frac12 \diver(\rho_{N,L} \nabla V^\eta * \rho_{N,L}) + R_{N,L}, \qquad
    	\rho_{N,L} (0) = \rho_0.
    	\end{equation}
    \end{lemma}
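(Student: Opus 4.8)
The statement is the Hilbert-type asymptotic expansion of \cite{oeschlager1987fluctuation}, and the plan is to reconstruct it by Taylor-expanding the nonlocal convolution $V^\eta*(\cdot)$ in \emph{even} powers of $\eta$ and then solving the resulting hierarchy of parabolic equations recursively. As a preliminary step I would establish well-posedness of \eqref{eq:2}: rewriting it as $\pa_t\rb=\tfrac12\Delta(\rb+\tfrac12\rb^2)$ shows it is quasilinear and uniformly parabolic once one knows $\rb\ge 0$ (maximum principle, using $\rho_0\ge 0$), so classical quasilinear parabolic theory yields a local smooth solution; the $L^1$-bound (integration in $x$) and the $L^\infty$-bound (maximum principle) persist in time, and a bootstrap through the equation propagates the $L^1\cap L^\infty$-control of every spatial derivative, giving $\rb\in C^\infty_{1,\infty}(\R^d\times[0,T])$ for all $T$. (This is exactly \cite[Lemma 1]{oeschlager1987fluctuation}; see also \cite{oelschlager2001viscous}.)

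Next I would expand the convolution. Since $V^\eta(x)=\eta^{-d}V(x/\eta)$ with $V=W\ast W$ radially symmetric, $\int_{\R^d}V=1$, and (by Assumption \ref{assV}(ii)) $\int_{\R^d}|z|^m V(z)\,\de z<\infty$ for all $m$, writing $V^\eta\ast g(x)=\int_{\R^d}V(z)g(x-\eta z)\,\de z$ and using Taylor's formula with integral remainder gives
\[
V^\eta\ast g = g+\sum_{k=1}^{L}\eta^{2k}A_{2k}g+\eta^{2L+2}\mathcal R^\eta_L[g],
\]
where $A_0=\mathrm{Id}$ and each $A_{2k}$ ($k\ge 1$) is a constant-coefficient differential operator of order $2k$ — the odd-order terms vanish because $V$ is even — with $A_2=\tfrac{1}{2d}\big(\int_{\R^d}|z|^2V(z)\,\de z\big)\Delta$, and $\mathcal R^\eta_L[g]$ is the Taylor remainder. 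Using translation invariance of the $L^1$- and $L^\infty$-norms one checks that $D^\alpha\mathcal R^\eta_L[g]=\mathcal R^\eta_L[D^\alpha g]$ is controlled in $L^1\cap L^\infty$ by $\|D^\alpha g\|$ in $W^{2L+2,1}\cap W^{2L+2,\infty}$ times $\int_{\R^d}V(z)|z|^{2L+2}\,\de z$, uniformly in $\eta$; hence $\mathcal R^\eta_L$ maps $C^\infty_{1,\infty}$ into itself with $N$-uniform bounds.

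Now I would insert the ansatz \eqref{RNL}, i.e. $\rho_{N,L}=\rb+\sum_{l=1}^{L}\eta^{2l}\rho_l$, into the right-hand side of \eqref{eq:3}, apply the above expansion, and collect equal powers of $\eta^2$. The order-$\eta^0$ equation is \eqref{eq:2}, so the zeroth term is $\rb$; writing $\rho_0:=\rb$ for brevity, the order-$\eta^{2p}$ equation ($1\le p\le L$) is the \emph{linear} parabolic problem
\[
\pa_t\rho_p=\tfrac12\diver\big((1+\rb)\na\rho_p+\rho_p\na\rb\big)+\tfrac12\diver F_p,\qquad \rho_p(0)=0,
\]
with $F_p=\sum\rho_l\na(A_{2k}\rho_m)$, the sum over $l+k+m=p$ and $(l,k,m)\notin\{(p,0,0),(0,0,p)\}$, so $F_p$ depends only on $\rho_0,\dots,\rho_{p-1}$. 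Since $1+\rb$ is bounded above and below by positive constants and all coefficients lie in $C^\infty_{1,\infty}$, linear parabolic theory (Duhamel against the non-autonomous parabolic semigroup, then bootstrap) yields $\rho_p\in C^\infty_{1,\infty}(\R^d\times[0,T])$, and one solves for $\rho_1,\dots,\rho_L$ successively. With this choice, all contributions of orders $\eta^0,\dots,\eta^{2L}$ to $R_{N,L}:=\pa_t\rho_{N,L}-\tfrac12\Delta\rho_{N,L}-\tfrac12\diver(\rho_{N,L}\na V^\eta\ast\rho_{N,L})$ cancel by construction, leaving
\[
R_{N,L}=-\tfrac12\diver\Big(\sum_{\substack{0\le l,k,m\le L\\ l+k+m\ge L+1}}\eta^{2(l+k+m)}\rho_l\na(A_{2k}\rho_m)\Big)-\tfrac12\,\eta^{2L+2}\diver\big(\rho_{N,L}\na\mathcal R^\eta_L[\rho_{N,L}]\big).
\]
The first term is a finite sum of $\eta^{2(L+1)+2j}$ ($j\ge 0$) times fixed $C^\infty_{1,\infty}$-functions built from the $\rho_l$ and their derivatives ($C^\infty_{1,\infty}$ being an algebra), and the second is $C^\infty_{1,\infty}$ by the previous step together with the $N$-uniform bound on $\rho_{N,L}$ in $C^\infty_{1,\infty}$ (a finite sum with $\eta\le 1$). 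Since $\eta\le 1$, one obtains $R_{N,L}\in C^\infty_{1,\infty}(\R^d\times[0,T])$ with $\sup_N\big(\|R_{N,L}\|_{L^1(\R^d\times[0,T])}+\|R_{N,L}\|_{L^\infty(\R^d\times[0,T])}\big)\le C(L)\eta^{2L+2}=C(L)N^{-2\beta(L+1)/d}$, which is \eqref{eq:L} together with the claimed estimate.

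The combinatorial bookkeeping in the last step is routine; the real work is the preliminary step together with the a priori estimates needed in the hierarchy, namely propagating \emph{globally} on $[0,T]$ the full $C^\infty_{1,\infty}$-regularity (all derivatives in $L^1\cap L^\infty$, with decay) for the quasilinear equation \eqref{eq:2} and for each linear equation in the hierarchy. This is where one must invoke (or reprove) the parabolic regularity and decay theory, and it is precisely the content encapsulated in \cite[Lemma 1]{oeschlager1987fluctuation}.
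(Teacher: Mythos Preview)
Your reconstruction is correct and matches what the paper does: the paper itself gives no proof of this lemma but simply cites \cite[Lemma 1]{oeschlager1987fluctuation} and remarks that ``the functions $\rho_l$ are constructed as solutions to the linearized version of \eqref{eq:2} plus a perturbation which depends on the functions $\rb,\rho_1,\ldots,\rho_{l-1}$'', referring to \cite[Appendix A]{oelschlager2001viscous} for the formal expansion --- which is precisely the Hilbert-type hierarchy you wrote down via Taylor-expanding $V^\eta\ast(\cdot)$ in even powers of $\eta$ and solving the resulting linear parabolic equations recursively.
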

    For a proof of Lemma \ref{lemmassymp} we refer to \cite{oeschlager1987fluctuation}. The functions $\rho_l$ are constructed as solutions to the linearized version of \eqref{eq:2} plus a perturbation which depends on the functions $\rb, \rho_1, \ldots, \rho_{l-1}$, we refer to \cite[Appendix A]{oelschlager2001viscous} for a formal derivation of the expansion.  \\
    
    The main result of this section is an existence and uniqueness result for smooth solutions to problem \eqref{eq:3} for small $\eta >0$ and estimates for the difference $\rb^\eta-\rho_{N,L}$:
    \begin{lemma}\label{lem.E}
    	Let $0\leq\rho_0\in H^s(\R^d)$ for $s>d/2+2$. Then for any $T>0$ there exists $\eta_0>0$ so that for $0<\eta<\eta_0$ problem \eqref{eq:3} has a unique non-negative solution  $\rb^\eta \in L^\infty(0,T;H^s(\R^d)) \cap L^2(0,T;H^{s+1}(\R^d))$. Furthermore, the following uniform estimates hold
    	\begin{equation}\label{error_rb_rb_eta}
    	\|\rb-\rb^\eta\|_{L^\infty(0,T;H^s(\R^d))}+\|\rb-\rb^\eta\|_{L^2(0,T;H^{s+1}(\R^d))}\leq C\eta^2,
    	\end{equation}
    	and
    		\begin{align}
    		\label{eq:reEn}
    		\sup_{0\leq t\leq T}\mathcal{H}_1(\rb^\eta|\rb)=\sup_{0\leq t\leq T}\int_{\R^d}\rb^\eta\log\frac{\rb^\eta}{\rb}\de x\leq C\eta^4,
    	\end{align}
    where $C>0$ depends on $T$, $d$, and $\rb$.
    
    Additionally, under Assumption \ref{assInit}, the following estimate holds
    	\begin{equation}\label{eq:error_approx}
    	\|\rb^\eta-\rho_{N,L}\|_{L^\infty(0,T;H^s(\R^d))}+\|\rb^\eta-\rho_{N,L}\|_{L^2(0,T;H^{s+1}(\R^d))}\leq C\eta^2.
    	\end{equation}
    
    \end{lemma}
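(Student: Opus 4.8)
The plan is to treat \eqref{eq:3} as a quasilinear parabolic equation and to derive the whole lemma from a single $H^s$--energy estimate, run on the difference of $\rb^\eta$ with a reference solution: with $\rb$ itself --- the source being then the consistency error $\tfrac12\diver\!\big(\rb^\eta(V^\eta\ast-\mathrm{Id})\na\rb\big)$ --- for \eqref{error_rb_rb_eta}, and with the approximating sequence $\rho_{N,L}$ of Lemma \ref{lemmassymp} --- the source being then $-R_{N,L}$, which requires Assumption \ref{assInit} for $\rho_{N,L}$ to be available --- for \eqref{eq:error_approx}. Since $\rho_{N,L}-\rb=\sum_{l=1}^{L}\eta^{2l}\rho_l$ with $\rho_l\in C^\infty_{1,\infty}\subset H^s$, \eqref{error_rb_rb_eta} moreover follows from \eqref{eq:error_approx} by the triangle inequality. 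First I would record the two structural properties of $V^\eta=W^\eta\ast W^\eta$ that make the estimate work: since $\|W^\eta\|_{L^1}=1$ and $\na V^\eta\ast g=V^\eta\ast\na g$, convolution with $V^\eta$ is a contraction on every $H^r(\R^d)$ uniformly in $\eta$ (no $\eta^{-1}$ loss); and since $W$ is radial with all moments finite, $\widehat{V^\eta}(\xi)=\widehat W(\eta\xi)^2\in[0,1]$ with $1-\widehat{V^\eta}(\xi)\le C\eta^2|\xi|^2$, while the commutator $[W^\eta\ast,a]$ has $L^2$--operator norm at most $C\eta\|\na a\|_{L^\infty}$ by Schur's test (using $\int_{\R^d}|z|W^\eta(z)\,\de z=O(\eta)$).

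Local well-posedness of \eqref{eq:3} in $C([0,T_\ast);H^s)$, $s>d/2+2$, I would obtain by a standard Galerkin or fixed-point scheme together with the $H^s$ energy estimate, with nonnegativity of $\rb^\eta$ inherited from the probabilistic representation \eqref{eq:interPar} (equivalently, from the maximum principle for the linear Fokker--Planck equation with smooth drift $V^\eta\ast\na\rb^\eta$). The key point of the a priori estimate, and the step I expect to be the main obstacle, is the coercivity of the top-order part of the nonlinearity: when $\diver(\rb^\eta V^\eta\ast\na f)$ is differentiated up to order $s$ and tested against $D^\alpha\na f$, the dangerous contribution is $-\tfrac12\langle\rb^\eta\,V^\eta\ast D^\alpha\na f,\,D^\alpha\na f\rangle$, for which a naive Cauchy--Schwarz bound gives $\tfrac12\|\rb^\eta\|_{L^\infty}\|f\|_{H^{s+1}}^2$ --- not in general absorbable by the $\tfrac12\Delta$--dissipation, since $\|\rb^\eta\|_{L^\infty}=O(1)$ need not be small. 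The remedy is to write $V^\eta\ast=W^\eta\ast W^\eta\ast$ and use the commutator bound: for $h\in L^2(\R^d)$,
\[
\langle\rb^\eta V^\eta\ast h,h\rangle=\int_{\R^d}\rb^\eta\,|W^\eta\ast h|^2\,\de x+\big\langle W^\eta\ast h,\,[W^\eta\ast,\rb^\eta]h\big\rangle\ \ge\ -C\eta\,\|\na\rb^\eta\|_{L^\infty}\,\|h\|_{L^2}^2 ,
\]
so that the full principal operator $\tfrac12\Delta+\tfrac12\diver(\rb^\eta V^\eta\ast\na\,\cdot)$ still produces a coercive term $-\tfrac14\|f\|_{H^{s+1}}^2$ for $\eta$ small, provided $\|\na\rb^\eta\|_{L^\infty}$ is controlled --- which the bootstrap below ensures.

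With coercivity in hand, the $H^s$ estimate on the difference equation $\pa_t f=\tfrac12\Delta f+\tfrac12\diver(\rb^\eta V^\eta\ast\na f)+(\text{lower order in }f)+(\text{source})$ closes routinely: the remaining pieces of $\diver(\rb^\eta V^\eta\ast\na f)$ are Kato--Ponce commutator terms absorbed into $\delta\|f\|_{H^{s+1}}^2+C(1+\|\rb^\eta\|_{H^s}^2)\|f\|_{H^s}^2$ (with $\int_0^T\|\rb^\eta\|_{H^{s+1}}^2\,\de t<\infty$ from the dissipation); the lower-order linear term in $f$ is controlled by $\|\rb\|_{L^\infty(0,T;H^s)\cap L^2(0,T;H^{s+1})}$, resp. the uniform-in-$N$ Sobolev bounds on $\rho_{N,L}$ from Lemma \ref{lemmassymp}; and the source is $O(\eta^2)$ in $L^\infty(0,T;H^s)$ --- for the consistency error by $1-\widehat{V^\eta}\le C\eta^2|\xi|^2$ together with the parabolic regularity of $\rb$, for $-R_{N,L}$ (once $L$ is fixed large enough) by interpolating $\|R_{N,L}\|_{L^1\cap L^\infty}\le C(L)\eta^{2L+2}$ against the uniform Sobolev bounds on $R_{N,L}$. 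Since $f(0)=0$, Gr\"onwall's inequality yields $\|f\|_{L^\infty(0,T;H^s)}+\|f\|_{L^2(0,T;H^{s+1})}\le C\eta^2$, which is \eqref{eq:error_approx} and \eqref{error_rb_rb_eta}. The uniform bound on $\|\rb^\eta(t)\|_{H^s}$ thereby obtained closes the continuity argument (taking $\sup_{[0,t]}\|\rb^\eta\|_{H^s}\le 1+\sup_{[0,T]}\|\rho_{N,L}\|_{H^s}$ as bootstrap hypothesis), rules out finite-time blow-up, and hence gives a global solution on $[0,T]$ for every $T>0$; uniqueness follows from the same estimate applied to the difference of two nonnegative solutions (or from well-posedness of \eqref{eq:interPar}).

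Finally, for the relative entropy \eqref{eq:reEn} I would use $\log x\le x-1$, which --- $\rb^\eta$ and $\rb$ both being probability densities --- gives the Csisz\'ar-type bound $\mathcal{H}_1(\rb^\eta|\rb)\le\int_{\R^d}(\rb^\eta-\rb)^2/\rb\,\de x$. Cutting $\R^d$ into a large ball and its complement, the ball contributes $\lesssim\|\rb^\eta-\rb\|_{L^2}^2\le C\eta^4$ by strict positivity of $\rb$ for $t>0$ (strong maximum principle), and the complement is handled by Gaussian-type upper bounds on $\rb^\eta-\rb$ against a Gaussian lower bound on $\rb$ (heat-kernel estimates for the Fokker--Planck equations with bounded drifts and datum in $C^\infty_{1,\infty}$); the square in the bound is exactly why an $O(\eta^2)$ estimate for $\rb^\eta-\rb$ upgrades to $O(\eta^4)$ for the entropy. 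The delicate points here are the integrability of $(\rb^\eta-\rb)^2/\rb$ at spatial infinity and the degeneration of the lower bound on $\rb$ as $t\to0^+$ (where $\mathcal{H}_1$ itself vanishes); alternatively, \eqref{eq:reEn} can be obtained directly by a Gr\"onwall argument for $t\mapsto\mathcal{H}_1(\rb^\eta(t)|\rb(t))$ built from the two evolution equations, bypassing pointwise lower bounds.
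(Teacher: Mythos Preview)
Your core energy argument is the same as the paper's: you correctly isolate the coercivity obstacle in the top-order term and resolve it via the factorisation $V^\eta=W^\eta\ast W^\eta$ together with the commutator bound $\|[W^\eta\ast,a]\|_{L^2\to L^2}\le C\eta\|\na a\|_{L^\infty}$, which is exactly how the paper handles its term $I_3$ (splitting into $I_{31}$, a good-sign piece $-\int\rb^\eta|D^\alpha\na W^\eta\ast w|^2\,\de x$, and the commutator $I_{32}$ bounded by $C\eta(1+\|w\|_{H^s})\|D^\alpha\na w\|_{L^2}^2$). The paper works in physical space (Taylor expansion, mean-value theorem) where you invoke the Fourier side ($1-\widehat{V^\eta}(\xi)\le C\eta^2|\xi|^2$, Schur's test), but these are equivalent. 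One organisational difference: the paper runs the difference estimate on $w=\rb-\rb^\eta$ first, with the consistency error as source, to prove \eqref{error_rb_rb_eta}, and then deduces \eqref{eq:error_approx} by the triangle inequality $\rb^\eta-\rho_{N,L}=(\rb^\eta-\rb)-\sum_{l\ge 1}\eta^{2l}\rho_l$; you propose the reverse order, which is fine but costs the extra step of controlling $R_{N,L}$ in $H^s$ by interpolation.

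For the relative entropy \eqref{eq:reEn}, the paper takes precisely your \emph{alternative} route: a direct Gr\"onwall on $t\mapsto\mathcal H_1(\rb^\eta(t)\,|\,\rb(t))$, using the two evolution equations to produce a Fisher-type dissipation $-\tfrac14\int\rb^\eta|\na\log(\rb^\eta/\rb)|^2\,\de x$ and a source $\int\rb^\eta|V^\eta\ast\na\rb^\eta-\na\rb|^2\,\de x\le C\eta^4$, the latter bounded by Taylor expansion combined with \eqref{error_rb_rb_eta}. This bypasses pointwise lower bounds on $\rb$ entirely. Your primary $\chi^2$ approach $\mathcal H_1\le\int(\rb^\eta-\rb)^2/\rb\,\de x$ is in principle viable, but under only $\rho_0\in H^s$ the Gaussian lower bound on $\rb$ and the uniform-in-$t$ integrability of $(\rb^\eta-\rb)^2/\rb$ at spatial infinity (including as $t\to 0^+$) are not free; the paper's direct entropy computation is both shorter and requires none of that machinery.
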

    
    \begin{proof} 
    	By applying Banach fixed point theorem, it is standard to obtain for each fixed $\eta>0$ the local existence and uniqueness of a non-negative solution $\rb^\eta$ to problem \eqref{eq:3} until a time point $t^*(\eta)>0$, see for instance the techniques in \cite[Lemma 4]{chen2019rigorous} for a more general setting which includes \eqref{eq:3}.
    	
    	 We present here only the uniform estimates in $\eta$ for $w^\eta=\rb-\rb^\eta$, which guarantee that \eqref{error_rb_rb_eta} holds and
    that the solution $\rb^\eta$ can be extended to any fixed time $T>0$. For simplicity of the notation, we drop the index $\eta$ of $w^\eta$ from now on.
    
    Given that the solution $\rb^\eta$ exists on a time interval $[0,t^*(\eta)]$, we first note that the difference $w=\rb-\rb^\eta$ satisfies the equation
    	\begin{eqnarray}
    		\label{eq:w}\partial_t w=\frac12 \Delta w+\frac12 \diver (w\nabla\rb+(\rb-w)\nabla(\rb-V^\eta*\rb)+(\rb-w)\nabla V^\eta*w),
    	\end{eqnarray}
     on $(0,t^*(\eta))$ with initial data $w|_{t=0}=0$.
    		
    	For any multi-index $\alpha\in \N^d$ with $|\alpha|\leq s$, we apply $D^\alpha$ to the above equation, take the inner product with the test function $D^\alpha w$
    	and obtain
    	\begin{align}\label{eq:change_Hs_w}
    		&\frac{\de }{\de t}\int_{\R^d} |D^\alpha w|^2 \de x +\int_{\R^d} |D^\alpha\nabla w|^2 \de x=- \int_{\R^d} D^\alpha (w\nabla\rb)D^\alpha \nabla w \de x \nonumber\\ 
    		&-\int_{\R^d} D^\alpha \big((\rb-w)\nabla(\rb-V^\eta*\rb)\big)D^\alpha \nabla w \de x \nonumber \\
    		&-\int_{\R^d} D^\alpha \big((\rb-w)\nabla V^\eta*w\big)D^\alpha \nabla w \de x.
    	\end{align}
    The three terms on the right hand side, which will be denoted by $I_1$, $I_2$, and $I_3$ in chronological order, can be estimated separately.
    \begin{align*}
    I_1(t) &:= - \int_{\R^d} D^\alpha (w(t)\nabla\rb(t))D^\alpha \nabla w(t) \de x \\
    I_2(t) &:= -\int_{\R^d} D^\alpha \big((\rb-w)(t)\nabla(\rb(t)-V^\eta*\rb(t))\big)D^\alpha \nabla w(t) \de x\\
    I_3(t)&:=-\int_{\R^d} D^\alpha \big((\rb-w)(t)\nabla V^\eta*w(t)\big)D^\alpha \nabla w (t)\de x.
    \end{align*}
     For simplicity, we drop the dependency on $t>0$ in the following lines. Term $I_1$ can be easily controlled in the following way:
    \begin{align}\label{ineq:I_1}
    |I_1| &\leq \| D^\alpha (w\nabla\rb)\|_{L^2(\R^d)} \|D^\alpha\nabla w\|_{L^2(\R^d)}\nonumber\\
    &\leq C \|w\|_{H^s(\R^d)}\|\nabla\rb\|_{H^{s}(\R^d)} \|D^\alpha\nabla w\|_{L^2(\R^d)},
    \end{align}
    where we used the Moser-type inequality for the product (Lemma \ref{lem.moser}) as well as $H^s(\R^d) \hookrightarrow L^\infty(\R^d)$ in the last step. For $I_2$, we need to use the fact that $\rb$, which is given by Lemma \ref{lemmassymp}, is a smooth solution of \eqref{eq:1}, and the second moment of potential $V^\eta$ is of order $\eta^2$. Thus, by H{\"o}lder's inequality and Taylor's expansion
    \begin{align}\label{ineq:I_2}
    &|I_2|\leq  C \|\rb-w\|_{H^s(\R^d)}\sum_{|\beta|\leq s+1}\|D^\beta(\rb-V^\eta*\rb)\|_{L^{\infty}(\R^d)} \|D^\alpha\nabla w\|_{L^2(\R^d)}\nonumber\\
    &=C \|\rb-w\|_{H^s(\R^d)}\sum_{|\beta|\leq s+1} \Big\|\int_{\R^d}V^\eta(y)(D^{\beta}\rb(\cdot-y)-D^{\beta}\rb(\cdot))\de y\Big\|_{L^\infty(\R^d)} \|D^\alpha\nabla w\|_{L^2(\R^d)}\nonumber\\
    	&=C \|\rb-w\|_{H^s(\R^d)}\sum_{|\beta|\leq s+1} \Big\|D^2
    	D^{\beta}\rb\Big\|_{L^\infty(\R^d)}\int_{\R^d}V^\eta(y)|y|^2
    \de y \|D^\alpha\nabla w\|_{L^2(\R^d)}\nonumber\\
    	&\leq C \|\rb-w\|_{H^s(\R^d)}\|\rb\|_{W^{s+3,\infty}(\R^d)}\eta^2
    	\|D^\alpha\nabla w\|_{L^2(\R^d)}\nonumber\\
    	&\leq  C\eta^2	(1+\|w\|_{H^s(\R^d)})\|D^\alpha\nabla w\|_{L^2(\R^d)},
    \end{align}
    	where $C>0$ depends on $\|\rb\|_{L^\infty(0,T;H^{s+3+d/2}(\R^d))}$. Here we have used Taylor's expansion in the third line as well as the fact that $V^\eta$ is symmetric, so that $\int_{\R^d}y V^\eta(y)\de y=0.$
    	
    For $I_3$ notice that $\rb-w=\rb^\eta\geq 0$. By using the convolution structure of $V^\eta=W^\eta*W^\eta$ and recalling the commutator notation $[D^\alpha,f]g=D^\alpha(fg)-fD^\alpha g$, we split the estimate for $I_3$ into estimates for two further terms $I_{31}, I_{32}$ in the following way, 
    \begin{align*}
    	I_3&=-\int_{\R^d} D^\alpha \big((\rb-w)\nabla V^\eta*w\big)D^\alpha \nabla w \de x\\
    	&= -\int_{\R^d} \big([D^\alpha,\rb-w] \nabla V^\eta*w\big) D^\alpha\nabla w \de x -\int_{\R^d} (\rb-w) D^\alpha \nabla V^\eta*w  D^\alpha \nabla w \de x\\
    	&= -\int_{\R^d} \big([D^\alpha,\rb-w] \nabla V^\eta*w\big) D^\alpha\nabla w \de x-\int_{\R^d} (\rb-w) |D^\alpha \nabla W^\eta*w|^2 \de x  \\
    	& - \int_{\R^d} D^\alpha \nabla W^\eta*w \Big( W^\eta * ((\rb-w) D^\alpha\nabla w)-  (\rb-w) W^\eta * D^\alpha\nabla w\Big) \de x\\
    	&=I_{31}-\int_{\R^d} \rb_\eta |D^\alpha \nabla W^\eta*w|^2 \de x +I_{32},
    \end{align*}
    where we used that $\int_{\R^{d}} (W^\eta \ast f) g \de x = \int_{\R^{d}} f (W^\eta \ast g) \de x$ due to symmetry of $W^\eta$ and
    \begin{align*}
    I_{31}  &:=- \int_{\R^d} \big([D^\alpha,\rb-w] \nabla V^\eta*w \big) D^\alpha\nabla w \de x\\
    I_{32}&:=- \int_{\R^d} D^\alpha \nabla W^\eta*w \Big( W^\eta * ((\rb-w) D^\alpha\nabla w)-  (\rb-w) W^\eta * D^\alpha\nabla w\Big) \de x.
    \end{align*}
    $I_{31}$ can be easily estimated by using the Moser--type inequality for the commutator (Lemma \ref{lem.comm} in the appendix with $|\alpha|\leq s$) and Young's convolution inequality together with $\Vert V^\eta \Vert_{L^1(\R^d)} =1$, i.e.
    \begin{align}\label{ineq:I_31}
    	I_{31} 
    	&\leq  C\Big( \|D ( \rb-w)\|_{L^\infty(\R^d)}\|D^{s}V^\eta*w\|_{L^2(\R^d)}\nonumber\\
    	&\hspace{0.5cm}+  \|D^s (\rb-w)\|_{L^2(\R^d)}\|\nabla V^\eta*w\|_{L^\infty(\R^d)}\Big) \|D^\alpha\nabla w\|_{L^2(\R^d)} \nonumber\\
    	&\leq   C(1+\|w\|_{H^s(\R^d)})\|w\|_{H^s(\R^d)} \|D^\alpha\nabla w\|_{L^2(\R^d)},
    \end{align}
    where $C>0$ depends on the given solution $\rb$ of \eqref{eq:1} and we have used that $H^s(\R^d) \hookrightarrow W^{1,\infty}(\R^d)$.

    By using the mean-value theorem as well as Young's convolution inequality we obtain for $I_{32}$
    \begin{align}\label{ineq:I_32}
    	I_{32}
     &= \int_{\R^d} D^\alpha \nabla W^\eta*w (x) \int_{\R^d}W^\eta(x-y)\big((\rb-w)(x)-(\rb-w)(y)\big) D^\alpha\nabla w(y)\de y \de x \nonumber\\
     &\leq \|\nabla(\rb-w)\|_{L^\infty(\R^d)} \int_{\R^d} | D^\alpha \nabla W^\eta*w (x) | \int_{\R^d}W^\eta(x-y) |x-y| |D^\alpha\nabla w(y)|\de y \de x \nonumber\\
     &= \|\nabla(\rb-w)\|_{L^\infty(\R^d)} \int_{\R^d} | D^\alpha \nabla W^\eta*w (x) | \Big|W^\eta(\cdot) |\cdot|  \ast |D^\alpha\nabla w|\Big|(x) \de x\nonumber\\
    &\leq  \|\nabla(\rb-w)\|_{L^\infty(\R^d)} \big\||\cdot|W^\eta(\cdot)\big\|_{L^1(\R^d)} \| D^\alpha \nabla W^\eta*w\|_{L^2(\R^d)} \| D^\alpha \nabla w\|_{L^2(\R^d)}\nonumber\\
    &\leq C\eta (1+ \|w\|_{H^s(\R^d)}) \int_{\R^d} |D^\alpha\nabla w|^2 \de x
    \end{align}
    where $C$ depends on $\rb$ as well as on the Sobolev embedding constant for $H^s(\R^d)\hookrightarrow W^{1,\infty}(\R^d)$, and we have used the fact that $\big\||\cdot|W^\eta(\cdot)\big\|_{L^1(\R^d)}\leq \eta \big\||\cdot|W(\cdot)\big\|_{L^1(\R^d)}$,  and $\Vert D^\alpha \nabla W^\eta \ast w \Vert_{L^2(\R^d)} \leq \Vert D^\alpha \nabla w \Vert_{L^2(\R^d)}$.
    
    Combining estimates \eqref{eq:change_Hs_w}, \eqref{ineq:I_1}, \eqref{ineq:I_2} and \eqref{ineq:I_32}, we obtain by using \begin{align*}
    \Vert D^\alpha \nabla w \Vert_{L^2(\R^d)}\Vert w \Vert_{H^s(\R^d)}\leq 1/4 \Vert D^\alpha \nabla w \Vert_{L^2(\R^d)}^2 + C \Vert w \Vert_{H^s(\R^d)}^2
    \end{align*} from Young's inequality that
    \begin{align*}
    &\frac{\de }{\de t}\int_{\R^d} |D^\alpha w|^2 \de x +\int_{\R^d} |D^\alpha\nabla w|^2 \de x \\
    &\leq C \Vert D^\alpha \nabla w \Vert_{L^2(\R^d)} \Big(\Vert w \Vert_{H^{s}(\R^d)}(1+\eta^2) + \Vert w \Vert_{H^{s}(\R^d)}^2 + \eta^2 \Big)\\
    & + C \Vert D^\alpha \nabla w \Vert_{L^2(\R^d)}^2(\eta + \eta \Vert w \Vert_{H^{s}(\R^d)})\\
    &\leq C \Vert D^\alpha \nabla w \Vert_{L^2(\R^d)} \Big(\eta^2\Vert w \Vert_{H^{s}(\R^d)}+ \Vert w \Vert_{H^{s}(\R^d)}^2 + \eta^2 \Big)\\
    & +  \Vert D^\alpha \nabla w \Vert_{L^2(\R^d)}^2\Big(\frac{1}{4}+\eta C+ \eta C\Vert w \Vert_{H^{s}(\R^d)}\Big)+C\Vert w \Vert_{H^{s}(\R^d)}^2,
    \end{align*}
    which holds on $(0,t^*(\eta))$ and where the constant $C>0$ changes value from line to line. Finally, assuming that $\eta < \min\{1/(4C) ,1\}$, summing up the multi-indexes $|\alpha|\leq s$, 

    we obtain by Young's inequality and in particular $$\sum_{|\alpha|\leq s}\Vert D^\alpha \nabla w \Vert_{L^2(\R^d)} \Vert w \Vert_{H^{s}(\R^d)}^2 \leq \frac{1}{4}\Vert w \Vert_{H^{s+1}(\R^d)}^2 \Vert w \Vert_{H^{s}(\R^d)}^2 + C_1\Vert w \Vert_{H^{s}(\R^d)}^2, $$ that 
    \begin{align}\label{ineq:w_on_0_tstar}
    	&\dfrac{\de }{\de t} \|w\|^2_{H^s(\R^d)} +\Big(\frac14-\frac{1}{2}\|w\|^2_{H^s(\R^d)} - \frac{1}{4} \Vert w \Vert_{H^{s}(\R^d)} \Big)\|w\|^2_{H^{s+1}(\R^d)}
    	\leq C_2\|w\|^2_{H^s(\R^d)}+C_3\eta^4,
    \end{align}	
    with $C_2, C_3 >0$ not depending on $\eta>0$. Since the initial data satisfies $\|w(\cdot,0)\|^2_{H^s(\R^d)}=0$, this allows us to conclude by standard techniques that $\exists\eta_0>0$ such that for $\eta<\eta_0$ it holds
    \begin{align}\label{ineq:uniform_estimates}
    \|w\|_{L^\infty(0,T;H^s(\R^d))}+\|w\|_{L^2(0,T;H^{s+1}(\R^d))}\leq C\eta^2,
    \end{align}
    where $C$ depends on $T$, $d$, and $\rb$. 
    
    For the reader's convenience -- since the dependence of $w$ and $t^*(\eta)\leq  T$ on $\eta>0$ requires some care -- we present the proof of \eqref{ineq:uniform_estimates}:
    Let us fix $\eta < \eta_0$, where $\eta_0$ will be chosen later. Since $\|w(\cdot,0)\|^2_{H^s(\R^d)}=0$, by potentially reducing the value of $t^*(\eta)$ to $t_1^\star(\eta)$, we can conclude by the regularity of $w$ that $\sup_{0<t<t_1^*(\eta)}\Vert w \Vert_{H^s(\R^d)} <1/2$. Hence, by Gronwall's inequality we get from \eqref{ineq:w_on_0_tstar} that
    \begin{align}\label{ineq:first_iteration}
    \sup_{0<t<t_1^*(\eta)}\Vert w \Vert_{H^s(\R^d)}^2 \leq e^{C_2 t_1^*(\eta)}(\Vert w(0) \Vert_{H^s(\R^d)}^2 + t_1^*(\eta)C_3 \eta^4) \leq e^{C_2 T}  TC_3 \eta^4 
    \end{align}
    and consequently 
    \begin{align*}
    \|w\|_{L^\infty(0,t_1^*(\eta);H^s(\R^d))}+\|w\|_{L^2(0,t_1^*(\eta);H^{s+1}(\R^d))}\leq C(T)\eta^2.
    \end{align*}
    If $t_1^*(\eta) =T$ we are done, else taking $w(t_1^*(\eta))$ as new starting point, we notice that by setting $\eta_0^4< 1/(2C_3 Te^{C_2T})$ we get the existence of a time point $t_1^*(\eta)< t_2^*(\eta)\leq T$ such that $\sup_{t_1^*(\eta)<t<t_2^*(\eta)}\Vert w \Vert_{H^s(\R^d)} <1/2$.
    Repeating the argument from above, we know that \eqref{ineq:first_iteration} holds on $(0, t_2^*(\eta))$. This can be done as long as $t_k^*(\eta)\leq T$ since then the condition $e^{C_2 T}  TC_3 \eta^4 < 1/2$ is always fulfilled. By a contradiction argument, we see that we can actually reach the time point $T>0$ under the condition that $\eta_0^4< 1/(2C_3 Te^{C_2T})$, hence \eqref{ineq:uniform_estimates} holds. 
    
    We will use the uniform estimate \eqref{ineq:uniform_estimates} in order to obtain the relative entropy estimate in \eqref{eq:reEn}. Calculating the time derivative of the relative entropy by using both equations \eqref{eq:1} and \eqref{eq:3} leads to the following equation
    \begin{align*}
    	&\frac{\de }{\de t} \mathcal{H}_1(\rb^\eta|\rb)= \int_{\R^d} \partial_t\rb^\eta \log\bigg(\frac{\rb^\eta}{\rb}\bigg) \de x + \int_{\R^d} \Big(\pa_t \rb^\eta -\frac{\rb^\eta \pa_t \rb}{\rb}\Big)\de x \\
    	&= \frac{1}{2}\int_{\R^d}\big( \Delta \rb^\eta + \text{div}\big(\rb^\eta \nabla V^\eta \ast \rb^\eta\big)\big)\log\bigg(\frac{\rb^\eta}{\rb}\bigg) \de x - \frac{1}{2}\int_{\R^d} \bigg(\frac{\rb^\eta}{\rb}\bigg) \big(\Delta \rb + \text{div}(\rb \nabla \rb)\big) \de x.
    \end{align*}
    Integration by parts and rearranging the terms yields 
    \begin{align*}
    &\frac{\de }{\de t}  \mathcal{H}_1(\rb^\eta|\rb) = - \frac{1}{2}\int_{\R^d} \Big(\nabla \rb^\eta \nabla \log\bigg(\frac{\rb^\eta}{\rb}\bigg) - \nabla \bigg(\frac{\rb^\eta}{\rb}\bigg) \nabla \rb \Big)\de x\\
    &+ \frac{1}{2} \int_{\R^d} (\rb^\eta \nabla V^\eta \ast \rb^\eta  - \rb^\eta \nabla \rb) \nabla \log\bigg(\frac{\rb^\eta}{\rb}\bigg) \de x \\
    &= - \frac{1}{2}\int_{\R^d}\rb^\eta \bigg| \nabla \log\bigg(\frac{\rb^\eta}{\rb}\bigg)\bigg|^2 \de x + \frac{1}{2} \int_{\R^d} (\rb^\eta \nabla V^\eta \ast \rb^\eta  - \rb^\eta \nabla \rb) \nabla \log\bigg(\frac{\rb^\eta}{\rb}\bigg) \de x,
    \end{align*}
    where we used that $\rb^\eta \nabla \log(\rb^\eta/\rb) = \rb\nabla (\rb^\eta/\rb)$ in the second line as well as \begin{align*}
    \rb^\eta |\nabla \log(\rb^\eta /\rb)|^2 &= \nabla \log(\rb^\eta /\rb)\bigg(\frac{\rb \nabla \rb^\eta - \rb^\eta \nabla \rb }{\rb}\bigg)= \nabla \rb^\eta \cdot \nabla \log(\rb^\eta /\rb) -  \nabla \bigg(\frac{\rb^\eta}{\rb} \bigg)\cdot \nabla \rb
    \end{align*}
    in the last line. Hence, by Young's inequality and using that $\rho^\eta$ is non-negative we get
    \begin{align*}
    	&\frac{\de }{\de t}  \mathcal{H}_1(\rb^\eta|\rb)=- \frac12\int_{\R^d} \rb^\eta \Big|\nabla\log\Big(\frac{\rb^\eta}{\rho}\Big)\Big|^2\de x +\frac12\int_{\R^d}\rb^\eta\nabla\log\bigg(\frac{\rb^\eta}{\rho}\bigg)(V^\eta*\nabla\rb^\eta-\nabla\rb)\de x
    	\\
    	\leq & -\frac14 \int_{\R^d} \rb^\eta\Big|\nabla\log \Big(\frac{\rb^\eta}{\rho}\Big)\Big|^2 \de x+C\int_{\R^d}\rb^\eta|V^\eta*\nabla\rb^\eta-\nabla\rb^\eta+\nabla\rb^\eta-\nabla\rb|^2 \de x.
    \end{align*}
    Integration over $(0,t)$ for $t>0$ implies that
    \begin{align} \label{H_rb_eta_rb_first}
    	&	\mathcal{H}_1(\rb^\eta|\rb)(t)+\frac14 \int_{0}^{t}\int_{\R^d} \rb^\eta(s)\Big|\nabla\log\Big(\frac{\rb^\eta}{\rb}(s)\Big)\Big|^2 \de x \de s\nonumber\\
    	\leq & C(T)\Big(\int_{0}^t \int_{\R^d} \rb^\eta(s)|V^\eta \ast \nabla \rb^\eta(s) - \nabla \rb^\eta(s) |^2\de x \de s 	\nonumber\\
    	&\hspace{4cm}
    	+ \|\rb^\eta\|_{L^\infty(0,T;L^\infty(\R^d))}\|\nabla\rb^\eta-\nabla\rb\|^2_{L^2(0,T;L^2(\R^d))}\Big).
    \end{align}
    The first integral on the right-hand side can be estimated by Taylor's expansion in the following way:
    \begin{align*}
     &\int_{\R^d} \rb^\eta(s)|V^\eta \ast \nabla \rb^\eta(s) - \nabla \rb^\eta(s) |^2\de x \\
     &=\int_{\R^{d}} \rb^\eta(s,x)\Big|\int_{\R^{d}} V^\eta(x-y)\big(\nabla \rb^\eta(s,y)-\nabla\rb^\eta(s,x)\big) \de y\Big|^2 \de x\\
     &\leq C\int_{\R^d} \rb^\eta(s,x)\Big|\int_{\R^{d}} V^\eta(x-y)\big(D^2\rb^\eta(s,x)(x-y) + R_2(s,y)(x-y)^2\big) \de y\Big|^2 \de x,
    \end{align*}
    where $|R_2(s,y)|\leq C\Vert D^3 \rb^\eta\Vert_{L^\infty(0,T; L^\infty(\R^d))}$ denotes the remainder of Taylor's approximation of order 2 and we use the notation $D^2 f(x) = \sum_{|\alpha|=2}^{}D^\alpha f(x)$.
    Due to $$\int_{\R^{d}} |y|^2 V^\eta(y) \de y \leq C \eta^2$$ as well as the symmetry of $V$, which implies that the function $y \mapsto yV^\eta(y)$ is odd, we get
    \begin{align*}
    &\int_{\R^d} \rb^\eta(s)|V^\eta \ast \nabla \rb^\eta(s) - \nabla \rb^\eta(s) |^2\de x \\&\leq C\Vert D^3 \rb^\eta\Vert_{L^\infty(\R^d)}\int_{\R^{d}} \rb^\eta(s)\Big|\int_{\R^{d}} V^\eta(x-y)|x-y|^2\de y\Big|^2 \de x \\
    &\leq C\Vert D^3 \rb^\eta\Vert_{L^\infty(0,T; L^\infty(\R^d))} \eta^4.
    \end{align*}
    Hence, by \eqref{H_rb_eta_rb_first} and using that $H^s(\R^d)\hookrightarrow L^\infty(\R^d)$, the uniform estimates of $\rb^\eta$ in \eqref{error_rb_rb_eta} yield
    \begin{align}
    	&	\mathcal{H}_1(\rb^\eta|\rb)(t)+\frac14 \int_{0}^{t}\int_{\R^d} \rb^\eta(s)\Big|\nabla\log\Big(\frac{\rb^\eta}{\rb}(s)\Big)\Big|^2 \de x \de s\leq  C(T)\eta^4,
    \end{align}
    where $C>0$ depends on $\|\rb^\eta\|_{L^\infty(0,T;W^{3,\infty}(\R^d))}$ which is uniformly bounded independent of $\eta>0$. This finishes the proof of \eqref{eq:reEn}. \\
    
    Inequality \eqref{eq:error_approx} follows directly as a consequence from the definition of $\rho_{N,L}$, Lemma \ref{lemmassymp} and \eqref{error_rb_rb_eta}:
    \begin{align*} 
    &\|\rb^\eta-\rho_{N,L}\|_{L^\infty(0,T;H^s(\R^d))}+\|\rb^\eta-\rho_{N,L}\|_{L^2(0,T;H^{s+1}(\R^d))} \\&\leq C\big(1+\sum_{l=1}^L\Vert \rho_l \Vert_{L^\infty(0,T;H^s(\R^d))} + \Vert \rho_l \Vert_{L^2(0,T;H^{s+1}(\R^d))} \big)\eta^2 \leq C(T) \eta^2.
    \end{align*}
    This finishes the proof of Lemma \ref{lem.E}.
    \end{proof}
    
    \section{Proof of the main result} \label{sec:main_proof} 
     In this section, we present the relative entropy estimate between $\rho_N^\eta$ and the product measure $(\rb^\eta)^{\otimes N}$, where $\rb^\eta$ solves \eqref{eq:3}. This is the crucial step for the proof of the propagation of chaos result in Theorem \ref{mainthm}, which will be given at the end of this section. Obviously, the chaotic law $\rb_N^\eta: = (\rb^\eta)^{\otimes N}$
    satisfies 
    \begin{equation}\label{eq:li2}
        \partial_t \rb_N^\eta = \frac12 \sum_{i=1}^N \Delta_{x_i} \rb_N^\eta+ \frac12 \sum_{i=1}^N \text{div}_{x_i} \left(\rb_N^\eta  \nabla V^\eta*\rb^\eta (x_i)\right) ,\quad \rb_N^\eta \big|_{t=0} = (\rho_0)^{\otimes N}.
    \end{equation}
    
    The estimate for the difference $\rho_N^\eta-\rb_N^\eta$  in $L^1$-norm is deduced by using the relative entropy estimate and the Csisz\'ar-Kullback-Pinsker inequality. First, let us recall that the empirical measure for particle system \eqref{eq:1} is given by $\mu_N(t):= N^{-1}\sum_{i=1}^N \delta_{X_{i,t}^{N, \eta}(t)}.$

    The key difficulty and main technical step in the mean-field limit discussion in \cite{bresch2019mean,jabin2018quantitative, jabinWang2016} is a so-called \textit{large deviation type estimate}, which implies an estimate of the form
    \begin{align}\label{main_proof_H_ineq}
    \mathcal{H}(\rho_N^\eta|\rb_N^\eta) (t) \leq C(T)\Big( \mathcal{H}(\rho_N^\eta|\rb_N^\eta) (0) + \frac 1 {N^{\theta}}\Big)
    \end{align}
    for some $\theta>0$.
    
    In both cases (relative entropy or modulated free energy) the well-known Csisz\'ar-Kullback-Pinsker inequality then leads to strong $L^1$ convergence.
    
    In this article, in order to get the $L^\infty(0,T;L^1(\R^d))$ convergence for $\rho_N^\eta - \rb^{\otimes N}$, we only need to work with estimates for the relative entropy, where a large deviation estimate is not needed. Instead the following smoothed $L^2(\R^d)$ estimate obtained in \cite{oeschlager1987fluctuation} plays the key role in order to derive \eqref{main_proof_H_ineq} directly.
    \begin{lemma}[Mollified $L^2$ estimate Theorem 1 and Corollary 2 in \cite{oeschlager1987fluctuation}]
    	\label{lemL2}   Let the assumptions in Theorem \ref{mainthm} hold, then there exist constants $C>0$ and $0<\varepsilon < \frac{1}{2} - \beta \frac{d+2}{d}$ independent of $N$ 
    	such that 
    	\begin{align}\label{eq:K2}
    		\mathbb{E}&\Big[\sup_{t\in [0,T]}\big\|W^\eta* (\mu_N-\rho_{N,L})\big\|^2_{L^2(\R^d)}\Big] + \int_0^T \mathbb{E}\Big[ \big\|\nabla W^\eta* (\mu_N-\rho_{N,L})\big\|^2_{L^2(\R^d)}\Big] \de t\nonumber\\
    		&+ \int_{0}^T \mathbb{E}\Big[\langle \mu_N, |\nabla V^\eta \ast (\mu_N - \rho_{N,L})|^2 \rangle \Big]\de t \leq  \frac{C}{N^{\frac{1}{2}+\varepsilon}},
    	\end{align}
    	where $\rho_{N,L}$ denotes the asymptotic approximation of $\rb$ in Lemma \ref{lemmassymp}.
    \end{lemma}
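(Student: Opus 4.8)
The estimate \eqref{eq:K2} is the mollified fluctuation bound of Oelschl\"ager \cite[Theorem~1, Corollary~2]{oeschlager1987fluctuation}, valid under precisely Assumptions \ref{assV}--\ref{assInit}; the plan is to recall its mechanism, since that is what fixes both the rate $N^{-1/2-\eps}$ and the admissible range of $\eps$. Abbreviate $u_N:=W^\eta*(\mu_N-\rho_{N,L})$ for the mollified fluctuation, and note that $\nabla W^\eta*(\mu_N-\rho_{N,L})=\nabla u_N$ while, since $V^\eta=W^\eta*W^\eta$ with $W^\eta$ symmetric, $\nabla V^\eta*(\mu_N-\rho_{N,L})=W^\eta*\nabla u_N$, so that $\|\nabla V^\eta*(\mu_N-\rho_{N,L})\|_{L^2(\R^d)}\le\|\nabla u_N\|_{L^2(\R^d)}$ by Young's convolution inequality and $\|W^\eta\|_{L^1}=1$; the three quantities on the left of \eqref{eq:K2} are thus $\sup_{t\in[0,T]}\|u_N\|_{L^2}^2$, $\int_0^T\|\nabla u_N\|_{L^2}^2$ and $\int_0^T\langle\mu_N,|W^\eta*\nabla u_N|^2\rangle$.

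First I would write the stochastic evolution of $u_N$. By It\^o's formula $\mu_N$ solves, in the distributional sense, $\partial_t\mu_N=\tfrac12\Delta\mu_N+\tfrac12\diver(\mu_N\,\nabla V^\eta*\mu_N)+\dot{\mathcal M}_t$, where, tested against $\phi$, the bracket of the martingale $\mathcal M_t$ equals $\tfrac1N\int_0^t\langle\mu_N,|\nabla\phi|^2\rangle\,\de s$; subtracting the mollified equation \eqref{eq:L} for $\rho_{N,L}$ and convolving with $W^\eta$ gives
\begin{align*}
\partial_t u_N=\tfrac12\Delta u_N+\tfrac12\,W^\eta*\diver\!\big(\mu_N\nabla V^\eta*\mu_N-\rho_{N,L}\nabla V^\eta*\rho_{N,L}\big)-W^\eta*R_{N,L}+W^\eta*\dot{\mathcal M}_t .
\end{align*}
Applying It\^o's formula to $\|u_N(t)\|_{L^2(\R^d)}^2$, the Laplacian yields the dissipation $-\|\nabla u_N\|_{L^2}^2$; an integration by parts in the divergence term (moving $W^\eta$ back onto $u_N$) turns the nonlinearity into $-\langle W^\eta*\nabla u_N,\,\mu_N\nabla V^\eta*\mu_N-\rho_{N,L}\nabla V^\eta*\rho_{N,L}\rangle$, and expanding $\mu_N=\rho_{N,L}+(\mu_N-\rho_{N,L})$ in the bracket isolates the manifestly good term $-\langle\mu_N,|W^\eta*\nabla u_N|^2\rangle=-\langle\mu_N,|\nabla V^\eta*(\mu_N-\rho_{N,L})|^2\rangle$, the remaining terms being linear or bilinear in $\mu_N-\rho_{N,L}$ with a factor $\nabla V^\eta*\rho_{N,L}$. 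The martingale part contributes the It\^o correction $\tfrac1N\|\nabla W^\eta\|_{L^2(\R^d)}^2=C\,N^{-1}\eta^{-(d+2)}=C\,N^{-1+\beta(d+2)/d}$, and the remainder produces terms bounded, via Lemma \ref{lemmassymp}, by $\eta^{2L+2}=N^{-2\beta(L+1)/d}$.

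The decisive step is the control of the linear and bilinear residual terms: here one uses that $\rho_{N,L}$ and all its derivatives are bounded in $L^1\cap L^\infty(\R^d\times[0,T])$ uniformly in $N$ (Lemma \ref{lemmassymp}, Assumption \ref{assInit}), the convolution structure $V^\eta=W^\eta*W^\eta$ together with the antisymmetry of $z\mapsto\nabla V^\eta(z)$, and the fact that $\rho_{N,L}$ solves \eqref{eq:L} exactly --- which is what allows the deterministic, $O(1)$-looking contributions to reorganise into quantities that are either absorbable into a small multiple of the two dissipation terms or bounded by $C\|u_N\|_{L^2}^2$ plus genuinely lower-order pieces. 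Granting this, taking expectations kills the martingale and Gronwall's lemma applied to $t\mapsto\mathbb E\|u_N(t)\|_{L^2}^2$ --- with the variance bound $\mathbb E\|u_N(0)\|_{L^2}^2\le CN^{-1}\eta^{-d}=CN^{\beta-1}$ for the i.i.d.\ initial positions --- bounds the left-hand side of \eqref{eq:K2} by $C\big(N^{-1+\beta(d+2)/d}+N^{\beta-1}+\eta^{2L+2}\big)$; since $\beta<\tfrac{d}{2(d+2)}$ one has $-1+\beta(d+2)/d<-\tfrac12$ (and $\beta-1<-\tfrac12$), so choosing $L$ with $2\beta(L+1)/d\ge\tfrac12+\eps$ leaves exactly $C\,N^{-1/2-\eps}$ for any $\eps\in(0,\tfrac12-\beta\tfrac{d+2}{d})$, the $\sup_{t\in[0,T]}$ on the first summand being recovered by the Burkholder--Davis--Gundy inequality on the martingale part. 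The main obstacle is precisely the residual-term step flagged above: it is where the repulsive sign $V^\eta\ge0$, the mollifier structure and the exact equation for $\rho_{N,L}$ all have to be used in concert, and where the bookkeeping of negative powers of $\eta$ must be done carefully so that the single constraint $\beta<\tfrac{d}{2(d+2)}$ closes the argument --- this analysis is carried out in full in \cite{oeschlager1987fluctuation}, whose statement we import.
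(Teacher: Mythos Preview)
The paper does not prove this lemma at all: it is stated as a direct quotation of \cite[Theorem~1 and Corollary~2]{oeschlager1987fluctuation} and used as a black box, with no argument supplied beyond the citation. Your proposal ultimately does the same --- you close by importing the statement from \cite{oeschlager1987fluctuation} --- so in that sense the two agree.

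What you add is a heuristic sketch of Oelschl\"ager's mechanism, and that sketch is accurate in its main lines: the energy identity for $\|u_N\|_{L^2}^2$ via It\^o's formula, the appearance of the dissipation $-\|\nabla u_N\|_{L^2}^2$ and of the good sign term $-\langle\mu_N,|\nabla V^\eta*(\mu_N-\rho_{N,L})|^2\rangle$ from the decomposition $\mu_N\nabla V^\eta*\mu_N-\rho_{N,L}\nabla V^\eta*\rho_{N,L}=\mu_N\,\nabla V^\eta*(\mu_N-\rho_{N,L})+(\mu_N-\rho_{N,L})\,\nabla V^\eta*\rho_{N,L}$, the It\^o correction of order $N^{-1}\|\nabla W^\eta\|_{L^2}^2=CN^{-1+\beta(d+2)/d}$, the initial variance $CN^{\beta-1}$, and the role of $\beta<\tfrac{d}{2(d+2)}$ in forcing both exponents below $-\tfrac12$. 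You are also right that the genuinely delicate step --- controlling the cross terms involving $\nabla V^\eta*\rho_{N,L}$ so that no stray negative power of $\eta$ survives --- is where the full argument of \cite{oeschlager1987fluctuation} is needed, and deferring to that reference there is exactly what the paper does as well.
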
 
    \begin{remark}[Modulated free energy]
    In the previous works in \cite{jabin2018quantitative} for $W^{-1,\infty}$ kernels and \cite{bresch2019mean} for logarithmic singular kernels, quantitative mean-field estimates are derived either for the total relative entropy $\mathcal{H}$ or the so-called \textit{modulated free energy $\mathcal{E}$} defined as $$\E(\rho_N^\eta | \rb_N^\eta):= \mathcal{H}(\rho_N^\eta|\rb_N^\eta) + \mathcal{K}(\rho_N^\eta|\rb_N^\eta),$$ where -- using the notation $\nu_{N,y}(\cdot) = N^{-1}\sum_{i=1}^N \delta_{y_i}(\cdot)$ for $y=(y_1,\ldots, y_N) \in \R^{dN}$ -- 
    \begin{align}\label{eq:rE}
    	&\mathcal{H}(\rho_N^\eta|\rb_N^\eta) (t)= \frac{1}{N}\int_{\mathbb{R}^{dN}} \rho_N^\eta\log \frac{\rho_N^\eta}{\rb_N^\eta} \de x_1\ldots \de x_N\\
    	&\mathcal{K}(\rho_N^\eta|\rb_N^\eta)(t) := \frac12\mathbb{E} \Big[ \int_{\R^{2d}} V^\eta(x-y)\de (\mu_N-\rb^\eta)(x) \de (\mu_N-\rb^\eta)(y)\Big] \nonumber \\
    	&= \frac12\int_{\R^{dN}} \rho^\eta_N(z_1, \ldots, z_N) \int_{\R^{2d}} V^\eta(x-y)\de (\nu_{N,z}-\rb^\eta)(x) \de (\nu_{N,z}-\rb^\eta)(y) \de z_1, \ldots, \de z_N.
    \end{align} 
    $\mathcal{K}$ corresponds to the so-called \textit{modulated energy} used in \cite{serfaty2020mean} and \cite{duerinckx16} for mean-field convergence in case of Coulomb or Riesz interactions and $\mathcal{H}$ corresponds to the classical (total) relative entropy. At this point we want to remark that since the articles \cite{jabin2018quantitative,bresch2019mean} are concerned with weak interaction, those estimates would correspond to a mean-field convergence of particle system \eqref{eq:1} towards the intermediate PDE \eqref{eq:3} for fixed $\eta>0$.
    Lemma \ref{lemL2} directly gives an estimate of the modulated energy in the following sense
    \begin{align}\label{eq:K1}
    	\mathcal{K}(\rho_N^\eta|\rho_{N,L}^{\otimes N}) = \frac{1}{2} \mathbb{E}\Big[\big\|W^\eta* (\mu_N-\rho_{N,L})\big\|^2_{L^2(\R^d)}\Big] =o\Big(N^{-\frac{1}{2}}\Big).
    \end{align}
    At this point, the convolution structure of the moderate interacting kernel plays an important role. The rate $o(N^{-\frac{1}{2}})$ should be heuristically optimal since it matches the scaling of central limit theorem. 
    \end{remark}
    \begin{remark}
    It is interesting to notice that convergence in $L^2(\R^d)$-norm for the \\ smoothed empirical measure, i.e. \eqref{eq:K2}, on the one hand implies a central limit theorem as shown in \cite{oeschlager1987fluctuation} and on the other hand in this article it can be used in order to show convergence in relative entropy (without explicitly proving large deviation estimates). This underlines the strength of the technical estimate in \cite{oeschlager1987fluctuation}.
    \end{remark}
    
    Next, we derive the relative entropy estimate by direct application of Lemma \ref{lemL2}. This, combined with the convergence result of \eqref{eq:3} towards \eqref{eq:2} as $\eta\to 0$, implies immediately Theorem \ref{mainthm}. 
    
    \begin{lemma}\label{lm.N}
    Let the assumptions of Theorem \ref{mainthm} hold. Then, there exist constants $C(T)>0$ and $0 < \eps < 1/2-\beta \frac{d+2}{d}$ independent of $N$ such that
    	\begin{align}
    		\sup_{0\leq t\leq T}\mathcal{H}(\rho_N^\eta|\rb_N^\eta)(t) \le \frac{C(T)}{N^{\min\{\frac12+\varepsilon,\frac{4\beta}{d}\}}},
    	\end{align}
    	where $\rho_N^\eta$ denotes the joint law of $(X_{i,t}^{N, \eta})_{i=1}^N$ and $\rb_N^\eta = (\rb^\eta)^{\otimes N}$ denotes the joint law of $(\bar{X}_{i,t}^{N, \eta})_{i=1}^N$. 
    \end{lemma}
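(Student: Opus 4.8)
The plan is to compute the time derivative of the total relative entropy $\mathcal{H}(\rho_N^\eta|\rb_N^\eta)(t)$ using the Kolmogorov forward equation \eqref{eq:li1} for $\rho_N^\eta$ and the factorized equation \eqref{eq:li2} for $\rb_N^\eta = (\rb^\eta)^{\otimes N}$. After integrating by parts, the diffusion terms produce the dissipation term $-\frac{1}{4N}\int_{\R^{dN}}\sum_{i=1}^N |\nabla_{x_i}\log(\rho_N^\eta/\rb_N^\eta)|^2 \rho_N^\eta\,\de x$ (after absorbing half of it with Young's inequality against the drift contribution), while the drift terms give a contribution of the form $\frac{1}{2N}\int_{\R^{dN}}\rho_N^\eta \sum_{i=1}^N \bigl(\frac{1}{N}\sum_{j}\nabla V^\eta(x_i-x_j) - \nabla V^\eta*\rb^\eta(x_i)\bigr)\cdot \nabla_{x_i}\log\frac{\rho_N^\eta}{\rb_N^\eta}\,\de x$. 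Applying Young's inequality to split off the dissipation, the remaining term is controlled by $\frac{1}{2N}\int_{\R^{dN}}\rho_N^\eta\sum_{i=1}^N \bigl|\frac{1}{N}\sum_j\nabla V^\eta(x_i-x_j) - \nabla V^\eta*\rb^\eta(x_i)\bigr|^2\,\de x$. Rewriting this probabilistically over the particle trajectories $(X_{i,t}^{N,\eta})$, whose joint law is $\rho_N^\eta$, and noting that $\frac{1}{N}\sum_j\nabla V^\eta(x-X_{j,t}) = \nabla V^\eta*\mu_N(x)$ (modulo the negligible diagonal term $\frac{1}{N}\nabla V^\eta(0)=0$ since $\nabla V(0)=\mathbf 0$), this is exactly $\frac{1}{2}\int_0^t \mathbb{E}\bigl(\langle \mu_N, |\nabla V^\eta*(\mu_N-\rb^\eta)|^2\rangle\bigr)\,\de s$. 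This yields the key estimate \eqref{idea.key_estimate} after inserting $\pm\rho_{N,L}$ inside the convolution and using the elementary inequality $|a+b|^2\le 2|a|^2+2|b|^2$.

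Next I would estimate the two terms on the right-hand side of \eqref{idea.key_estimate}. The first term, $\frac{1}{2}\int_0^t\mathbb{E}\bigl(\langle\mu_N,|\nabla V^\eta*(\mu_N-\rho_{N,L})|^2\rangle\bigr)\,\de s$, is bounded by $CN^{-1/2-\eps}$ directly by Lemma \ref{lemL2}. For the second term, $\frac{1}{2}\int_0^t\mathbb{E}\bigl(\langle\mu_N,|\nabla V^\eta*(\rho_{N,L}-\rb^\eta)|^2\rangle\bigr)\,\de s$, since $\mu_N$ is a probability measure we have $\langle\mu_N, |\nabla V^\eta*(\rho_{N,L}-\rb^\eta)|^2\rangle \le \|\nabla V^\eta*(\rho_{N,L}-\rb^\eta)\|_{L^\infty(\R^d)}^2$, and by Young's convolution inequality and $\|V^\eta\|_{L^1}=1$ together with $H^s(\R^d)\hookrightarrow W^{1,\infty}(\R^d)$ for $s>d/2+2$, this is bounded by $C\|\rho_{N,L}-\rb^\eta\|_{H^s(\R^d)}^2 \le C\eta^4 = CN^{-4\beta/d}$ using estimate \eqref{eq:error_approx} of Lemma \ref{lem.E}. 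Combining, $\mathcal{H}(\rho_N^\eta|\rb_N^\eta)(t) \le C(N^{-1/2-\eps}+N^{-4\beta/d})$ for all $t\in[0,T]$, which gives the claimed bound with the minimum exponent. The initial relative entropy vanishes since $\rho_N^\eta(0)=\rho_0^{\otimes N}=\rb_N^\eta(0)$.

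The main obstacle I anticipate is the rigorous justification of the entropy dissipation identity and the passage to the probabilistic representation. Two points require care. First, the formal computation $\frac{\de}{\de t}\mathcal{H}(\rho_N^\eta|\rb_N^\eta)$ involves integration by parts on $\R^{dN}$ and manipulations of $\log(\rho_N^\eta/\rb_N^\eta)$, which need sufficient regularity and decay; here the smoothness of $V^\eta$ for fixed $\eta>0$, classical parabolic regularity for \eqref{eq:li1}, and the $H^s$-bounds on $\rb^\eta$ from Lemma \ref{lem.E} (ensuring $\rb_N^\eta$ is bounded below on compacts and smooth) make this standard, but it should be noted. Second, one must correctly handle the diagonal term: the particle drift is $\frac{1}{N}\sum_{j\ne i}\nabla V^\eta(X_i-X_j)$, whereas $\nabla V^\eta*\mu_N(X_i)$ includes the $j=i$ term $\frac{1}{N}\nabla V^\eta(0)$; since $V=W*W$ is radially symmetric and smooth, $\nabla V^\eta(0)=\mathbf 0$, so the two coincide exactly and no error is incurred — this is where the convolution structure \eqref{eq:V1} is essential. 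Everything else is a direct assembly of Lemma \ref{lemL2}, Lemma \ref{lem.E}, and Young's inequality, so the substance of the proof is concentrated in establishing \eqref{idea.key_estimate}.
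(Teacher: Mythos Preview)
Your proposal is correct and follows essentially the same route as the paper: compute $\frac{\de}{\de t}\mathcal{H}(\rho_N^\eta|\rb_N^\eta)$, extract the Fisher dissipation, absorb the cross term via Young's inequality, rewrite the remainder as $\mathbb{E}\bigl(\langle\mu_N,|\nabla V^\eta*(\mu_N-\rb^\eta)|^2\rangle\bigr)$, split through $\rho_{N,L}$, and close with Lemma~\ref{lemL2} and Lemma~\ref{lem.E}. The only cosmetic difference is that for the term $\int_0^t\mathbb{E}\bigl(\langle\mu_N,|\nabla V^\eta*(\rho_{N,L}-\rb^\eta)|^2\rangle\bigr)\de s$ the paper invokes the $L^2(0,T;H^{s+1}(\R^d))$ bound of \eqref{eq:error_approx}, whereas you use the $L^\infty(0,T;H^s(\R^d))$ bound together with $H^s\hookrightarrow W^{1,\infty}$; both yield $C\eta^4$ and are equally valid.
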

    \begin{proof}
    	The evolution of the relative energy is given in the following by a direct computation (similar to the proof of Lemma \ref{lem.E}):
    	Using equations \eqref{eq:li1} and \eqref{eq:li2} for $\rho_N^\eta$ and $\rb_N^\eta$ respectively leads to
    	\begin{align*}
    	&\frac{\de}{\de t} \mathcal{H}(\rho_N^\eta | \rb_N^\eta) = \frac{1}{N}\int_{\R^{dN}} \partial_t \rho_N^\eta \log \bigg(\frac{\rho_N^\eta}{\rb_N^\eta} \bigg) \de x_1 \ldots \de x_N \\
    	&+ \frac{1}{N} \int_{\R^{dN}} \bigg(\partial_t \rho_N^\eta - \frac{\rho_N^\eta \partial_t \rb_N^\eta}{\rb_N^\eta}  \bigg)\de x_1 \ldots \de x_N\\
    	&= \frac{1}{2N} \int_{\R^{dN}} \Bigg[\sum_{i=1}^N \Delta_{x_i} \rho_N^\eta + \sum_{i=1}^N \diver_{x_i}\Big( \rho_N^\eta \frac{1}{N} \sum_{j=1}^N \nabla V^\eta(x_i - x_j)\Big)\Bigg]\log \bigg(\frac{\rho_N^\eta}{\rb_N^\eta} \bigg) \de x_1 \ldots \de x_N\\
    	&- \frac{1}{2N}\int_{\R^{dN}} \frac{\rho_N^\eta}{\rb_N^\eta}\Bigg[ \sum_{i=1}^N \Delta_{x_i} \rb_N^\eta + \sum_{i=1}^N\diver_{x_i}\Big(\rb_N^\eta \nabla V^\eta \ast \rb^\eta \Big)\Bigg] \de x_1 \ldots \de x_N
    	\end{align*}
    	Since
    	\begin{align*}
    	\rho^\eta_N |\nabla_{x_i} \log(\rho_N^\eta /\rb^\eta_N)|^2 &= \nabla_{x_i} \rho^\eta_N \cdot \nabla_{x_i} \log(\rho^\eta_N /\rb^\eta_N) -  \nabla_{x_i} \bigg(\frac{\rho^\eta_N}{\rb_N^\eta} \bigg)\cdot \nabla_{x_i} \rb_N^\eta,
    	\end{align*}
    	 we can reformulate the evolution of the relative energy in the following way
    	 \begin{align*}
    	 &\frac{\de }{\de t}\mathcal{H}(\rho_N^\eta | \rb_N^\eta) = -\frac{1}{2N} \int_{\R^{dN}}\sum_{i=1}^N \left|\nabla_{x_i} \log \bigg(\frac{\rho^\eta_N}{\rb^\eta_N}\bigg)\right|^2 \rho^\eta_N \de x_1\ldots \de x_N \nonumber\\
    	 &- \frac{1}{2N} \sum_{i=1}^N\int_{\R^{dN}}\nabla_{x_i} \log \bigg(\frac{\rho^\eta_N}{\rb^\eta_N}\bigg)\cdot \\&\hspace{2cm}\cdot\bigg(\frac{1}{N}\sum_{j=1}^N\nabla V^\eta(x_i-x_j) -\int_{\R^{d}} \nabla V^\eta(x_i-y)\rb^\eta(y) \de y\bigg) ~\rho^\eta_N \de x_1\ldots \de x_N\nonumber.
    	 \end{align*}
    	 Young's inequality and the fact that $\rho_N^\eta$ is non-negative leads to
    	\begin{align}\label{ineq:part_H_N}
    		&\frac{\de }{\de t} \mathcal{H}(\rho^\eta_N|\rb^\eta_N) \leq -\frac{1}{4N} \int_{\R^{dN}}\sum_{i=1}^N \left|\nabla_{x_i} \log \frac{\rho^\eta_N}{\rb^\eta_N}\right|^2 \rho^\eta_N \de x_1\ldots \de x_N \nonumber\\
    				&+ \frac{1}{4N} \sum_{i=1}^N\int_{\R^{dN}}\bigg|\frac{1}{N}\sum_{j=1}^N\nabla V^\eta(x_i-x_j) -\int_{\R^{d}} \nabla V^\eta(x_i-y)\rb^\eta(y) \de y\bigg|^2 ~\rho^\eta_N \de x_1\ldots \de x_N.
    	\end{align}
    In order the get the desired estimate for the relative entropy, we need an estimate for the second term on the right-hand side. A straightforward computation gives
     \begin{align*}
     &\frac{1}{4N} \sum_{i=1}^N\int_{\R^{dN}}\bigg|\frac{1}{N}\sum_{j=1}^N\nabla V^\eta(x_i-x_j) -\int_{\R^{d}} \nabla V^\eta(x_i-y)\rb^\eta(y) \de y\bigg|^2 ~\rho^\eta_N \de x_1\ldots \de x_N\\
     &= \frac{1}{4}\mathbb{E}\Big(\Big\langle \mu_N, \big|\nabla V^\eta \ast (\mu_N - \rb^\eta)\big|^2 \Big\rangle\Big),
     \end{align*}
     where $\langle \cdot, \cdot \rangle $ is understood in the weak sense.
    	Therefore, by integration over $(0,t)$ of \eqref{ineq:part_H_N} we obtain the following inequality for the relative entropy
    \begin{align}\label{eq:Ht}
    	&\mathcal{H}(\rho^\eta_N|\rb^\eta_N)(t)+\frac{1}{4N} \int^t_0\int_{\R^{dN}}\sum_{i=1}^N \left|\nabla_{x_i} \log \frac{\rho^\eta_N}{\rb^\eta_N}\right|^2 \rho^\eta_N \de x_1\ldots \de x_N \de s\\
    	\leq &\mathcal{H}((\rho_0)^{\otimes N}|(\rho_0)^{\otimes N})+ \frac{1}{4}\int_0^t\mathbb{E}\Big(\Big\langle \mu_N, \big|\nabla V^\eta \ast (\mu_N - \rb^\eta)\big|^2 \Big\rangle\Big) \de s\nonumber\\
    		 \leq &\frac{1}{2}\int_{0}^t\mathbb{E}\Big(\Big\langle \mu_N, \big|\nabla V^\eta \ast (\mu_N - \rho_{N,L})\big|^2 \Big\rangle\Big) \de s+ \frac{1}{2}\int_{0}^t\mathbb{E}\Big(\Big\langle \mu_N, \big|\nabla V^\eta \ast (\rho_{N,L} - \rb^\eta)\big|^2 \Big\rangle\Big) \de s\nonumber \\
    		  \leq & \frac{C}{N^{\frac12+\eps}}+ C\eta^4\leq \frac{C}{N^{\min\{\frac12+\varepsilon,\frac{4\beta}{d}\}}}\nonumber,
    \end{align}
    where we used Lemma \ref{lemL2}, the fact that
    \begin{align*}
    &\int_{0}^t\mathbb{E}\Big(\Big\langle \mu_N, \big|\nabla V^\eta \ast (\rho_{N,L} - \rb^\eta)\big|^2 \Big\rangle\Big) \de s \\
    &\leq \int_{0}^t \Vert \nabla V^\eta \ast (\rho_{N,L} - \rb^\eta ) \Vert_{L^\infty(\R^d)}^2 \de s\leq \Vert V^\eta \Vert_{L^1}^2\int_{0}^t \Vert \nabla (\rho_{N,L} - \rb^\eta )\Vert_{L^\infty(\R^d)}^2 \de s \\ &\leq C\int_{0}^t\Vert \rho_{N,L} - \rb^\eta  \Vert_{H^{s+1}(\R^d)}^2  \de s\leq C\eta^4
    \end{align*}
    for $s>d/2$, as well as the $L^2(0,T;H^{s+1}(\R^d))$ error estimates in Lemma \ref{lem.E} in the last step.
    \end{proof}
    
    Finally, we can prove our main theorem:
    \begin{proof}[Proof of Theorem \ref{mainthm}]
    	Using the super-additivity of the relative entropy as well as the fact that $\rb_N^\eta$ is a product measure on $\R^{dN}$, Lemma \ref{lem:superadd} shows that for all $ 1 \leq k \leq N$
    	$$
    	\frac{1}{k}\mathcal{H}_k(\rho_{N,k}^\eta|(\rb^\eta)^{\otimes k}) = \frac{1}{k}\int_{\R^{dk}} \rho^\eta_{N,k} \log \frac{\rho_{N,k}^\eta}{(\rb^\eta)^{\otimes k}} \de x_1 \ldots \de x_k\le 2\mathcal{H}(\rho^\eta_N|\rb_N^\eta),
    	$$ 
    	where we recall that $\rho_{N,k}^\eta$ denotes the $k$-th marginal of $\rho_{N}^\eta$.
    	Consequently, by the Csisz\'ar-Kullback-Pinsker inequality (Lemma \ref{lem:ckp}), one obtains from Lemma \ref{lm.N} immediately the following estimate for the difference of the $k$-th marginals $\rho_{N,k}^\eta-(\rho^\eta)^{\otimes k}$ 
    	\begin{align*}
    		\|\rho_{N,k}^\eta-(\rb^\eta)^{\otimes k}\|_{L^1(\R^{dk})}^2 \le 2 \mathcal{H}_k(\rho_{N,k}^\eta|(\rb^\eta)^{\otimes k}) \le 4k\mathcal{H}(\rho^\eta_N|\rb_N^\eta)\le \frac{Ck}{N^{\min\{\frac12+\varepsilon,\frac{4\beta}{d}\}}}.
    	\end{align*}
    Thus the results in Theorem \ref{mainthm} follow directly by taking into account the estimate in \eqref{eq:reEn} and the Csisz\'ar-Kullback-Pinsker inequality, i.e.
    	\begin{align}\label{eq:convbar}
    		\|\rb^\eta - \rb\|^2_{L^1(\mathbb{R}^d)} \le 2 \mathcal{H}_1(\rb^\eta|\rb)\le C \eta^4\leq \frac{C}{N^{\min\{\frac12+\varepsilon,\frac{4\beta}{d}\}}},
    	\end{align}
    	and for general $k\geq 1$
    		\begin{align}\label{eq:convbar_k}
    			\|(\rb^\eta)^{\otimes k} - \rb^{\otimes k }\|^2_{L^1(\mathbb{R}^{dk})} \le 2 \mathcal{H}_k((\rb^\eta)^{\otimes k}|\rb^{\otimes k})\leq 2 k \mathcal{H}_1(\rb^\eta|\rb) \leq\frac{C k}{N^{\min\{\frac12+\varepsilon,\frac{4\beta}{d}\}}},
    		\end{align}
    for $0<\eps < 1/2- \beta (d+2)/d$. Therefore we obtain that
    \begin{equation*}
    \|\rho^\eta_{N,k} - \rb^{\otimes k}\|_{L^1(\mathbb{R}^{dk})}\leq \|\rho^\eta_{N,k} - (\rb^\eta)^{\otimes k}\|_{L^1(\mathbb{R}^{dk})}+\|(\rb^\eta)^{\otimes k} - \rb^{\otimes k}\|_{L^1(\mathbb{R}^{dk})}\leq \frac{C k^{1/2}}{N^{\min\{\frac14+\varepsilon,\frac{2\beta}{d}\}}},
    \end{equation*}
    for $0<\eps < 1/4- \beta (d+2)/2d$.
    \end{proof}

    \section{Application to Coulomb interaction}\label{sec:Coulomb}
    In this section, we extend the method of closing the relative entropy estimate on $\R^d$ in \eqref{ineq:part_H_N} to moderately interacting systems with (both attractive and repulsive) Coulomb potentials in dimension $d\geq 2$. In the discussion in Section \ref{sec:main_proof}, the result of $L^2$ estimate given by \cite{oeschlager1987fluctuation} plays the important role in closing the relative entropy estimate. Actually, the convergence of probability in the sense given by \cite{LP} for deriving Vlasov-Poisson equation or in \cite{LCZ} for deriving diffusion system will also help in closing the relative entropy estimate. This will be explained in detail in the following. 
    
    More precisely, we consider the case of particle system \eqref{eq:1} with $V^\eta=\kappa\Phi^\eta$ (aggregation ($\kappa=1$) or repulsion ($\kappa =-1$)), where $\nabla\Phi = \frac{x}{|x|^{d}}$ and $\Phi^\eta$ is a regularization of $\Phi$ which satisfies
    \begin{align}\label{est:Phieta}
    \|D^k\Phi^\eta\|_{L^\infty(\R^d)}\leq C \eta^{-(d-2+k)} ~~~\forall k \in \N.
    \end{align}
    Now in contrast to the sections before, $V^\eta$ does not approximate a Dirac kernel but a kernel $\Phi$ of Coulomb type. An approximation fulfilling \eqref{est:Phieta} can be found for instance in \cite[Lemma 4.14]{Hol23} with use of a cut-off in \cite[Lemma 6.1]{LP}.
    For $\eta=N^{-\beta/d}$ with $\beta\in (0,1/4)$, we will prove the propagation of chaos result from \eqref{eq:1} (with $V^\eta=\kappa\Phi^\eta$) to the following diffusion equation with Coulomb type non-local drift
    \begin{align}\label{1.ks}
    \partial_t \rb = \sigma \Delta  \rb -\kappa \diver\big( \rb \nabla \Phi \ast \rb\big),
    \end{align}
    
    We still use notation $\rb^\eta$ and $\rb^\eta_N$ to be the solutions of the intermediate problems \eqref{eq:3} and \eqref{eq:li2} respectively with $V^\eta=\kappa\Phi^\eta$. 
    We give the following assumptions for $\rb$ and $\rb_\eta$ which can be obtained by certain assumption on the initial data $\rho_0$. Since it is not the main purpose of this article, we refer the readers to the well-posedness of diffusion-aggregation(repulsion) equations. For example, the result for two dimensional Keller-Segel equations in \cite{BDP}. 
    \begin{assumption} \label{Assump.coulomb}The solutions of \eqref{1.ks} and its intermediate version satisfies:
    	\begin{align}\label{est:solution}
    	&\|\rb\|_{L^\infty(0,T;L^1(\R^d)\cap L^\infty(\R^d))}\leq C, \quad\|\rb^\eta\|_{L^\infty(0,T;L^1(\R^d)\cap L^\infty(\R^d))}\leq C, \nonumber\\ &\|\rb-\rb^\eta\|_{L^\infty(0,T;L^1(\R^d))}\leq C \eta^2, \quad\||D^2 \Phi^\eta| \ast \bar{\rho}^\eta\|_{L^\infty(0,T;L^1(\R^d)\cap L^\infty(\R^d))}\leq C
    	\end{align}
    \end{assumption}
    The following lemma is a law of large numbers estimate for the i.i.d. random variables $(\bar X_{i,t}^{\eta})_{i=1}^N$, which will be used later for the relative entropy estimate:
    \begin{lemma}[Law of Large numbers, \cite{Hol23}]\label{lem.law}
    Let $(\bar X_{i,t}^{\eta})_{i=1}^N$ be the solution to system \eqref{eq:interPar} with $V^\eta = \kappa \Phi^\eta$ and let
    $\bar \rho^\eta$ be the density function associated to $\bar X_{i,t}^{\eta}$. 
    Given
    $\psi_\eta\in L^\infty(\R^d;\R^n)$ with
    $n\in\{1,d,d\times d\}$, we define the random variables
    \begin{align}\label{def.H}
    	h_i(t):=
    	\bigg|\frac{1}{N}\sum_{j=1}^N\psi_\eta\big(\bar X_{i,t}^{\eta}-\bar X_{j,t}^{\eta}\big)
    	- (\psi_\eta*\bar \rho^\eta)(\bar X_{i,t}^{\eta})\bigg|~~~i=1,\ldots,N. 
    \end{align}
    Then, for every $m\in\N$ and $T>0$, there exists $C(m)>0$ such that for all
    $0<t<T$,
    \begin{align*}
    	\mathbb{E}(h_i(t)^{2m}) 
    	&\le C(m)\|\psi_\eta\|_{L^\infty}^{2m}N^{-m}.
    \end{align*}
    \end{lemma}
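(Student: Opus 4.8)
The plan is to exploit the fact that the particles $\bar X_{1,t}^{\eta},\ldots,\bar X_{N,t}^{\eta}$ are independent and identically distributed with common density $\bar\rho^\eta(\cdot,t)$, so that for fixed $i$ the random variable $h_i(t)$ is, up to the excluded index $j=i$, a normalized sum of i.i.d.\ centered terms. First I would fix $i$ and $t$ and condition on $\bar X_{i,t}^{\eta}=x$. Writing $\psi_\eta * \bar\rho^\eta(x) = \mathbb{E}\big(\psi_\eta(x - \bar X_{j,t}^{\eta})\big)$ for any $j\neq i$, set $Z_j := \psi_\eta(x - \bar X_{j,t}^{\eta}) - (\psi_\eta * \bar\rho^\eta)(x)$ for $j \neq i$; conditionally on $\bar X_{i,t}^{\eta}=x$ these are i.i.d., mean zero, and bounded by $2\|\psi_\eta\|_{L^\infty}$. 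Then
\begin{align*}
h_i(t) = \frac{1}{N}\Big( \psi_\eta(0) - (\psi_\eta*\bar\rho^\eta)(x) \Big) + \frac{1}{N}\sum_{j\neq i} Z_j,
\end{align*}
where the first term is deterministic (given $x$) and bounded by $\tfrac{2}{N}\|\psi_\eta\|_{L^\infty}$.

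The key step is a moment bound for the centered i.i.d.\ sum $S_N := \sum_{j\neq i} Z_j$. By the Marcinkiewicz--Zygmund (or Rosenthal) inequality for sums of independent mean-zero random variables, $\mathbb{E}(|S_N|^{2m}) \le C(m)\, N^{m}\, (2\|\psi_\eta\|_{L^\infty})^{2m}$; equivalently one can expand $\mathbb{E}(S_N^{2m})$ combinatorially and note that, because the $Z_j$ are centered and independent, only terms in which every index appears at least twice survive, giving at most $O(N^m)$ such terms each bounded by $(2\|\psi_\eta\|_{L^\infty})^{2m}$. Hence $\mathbb{E}\big(|N^{-1} S_N|^{2m}\big) \le C(m)\|\psi_\eta\|_{L^\infty}^{2m} N^{-m}$. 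Combining this with the $O(N^{-1})$ deterministic term via $|a+b|^{2m}\le 2^{2m-1}(|a|^{2m}+|b|^{2m})$ and the fact that $N^{-2m}\le N^{-m}$, I would obtain $\mathbb{E}\big(h_i(t)^{2m}\,\big|\,\bar X_{i,t}^{\eta}=x\big)\le C(m)\|\psi_\eta\|_{L^\infty}^{2m}N^{-m}$ uniformly in $x$. Taking the expectation over $\bar X_{i,t}^{\eta}$ and then the supremum over $t\in(0,T)$ (the constant $C(m)$ does not depend on $t$, only on $m$) concludes the proof, and by symmetry of the construction the bound is independent of $i$.

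The only genuine obstacle is bookkeeping: one must be careful that the self-term $j=i$ is removed (so the sum is over $N-1$ genuinely i.i.d.\ centered variables, with the leftover $\psi_\eta(0)$ piece absorbed into the harmless $O(N^{-1})$ contribution), and one must invoke the correct form of the moment inequality for the vector-valued case $n\in\{1,d,d\times d\}$ — here it suffices to apply the scalar Marcinkiewicz--Zygmund bound componentwise and sum the finitely many components, which only changes $C(m)$ by a dimensional factor. No regularity of $\psi_\eta$ is needed beyond $\psi_\eta\in L^\infty$, which is exactly the hypothesis; in the application $\psi_\eta$ will be $\nabla\Phi^\eta$ or $D^2\Phi^\eta$, whose $L^\infty$ norms blow up like powers of $\eta^{-1}$ by \eqref{est:Phieta}, but that dependence is carried entirely inside $\|\psi_\eta\|_{L^\infty}^{2m}$ and is handled later when balancing against the scaling $\eta=N^{-\beta/d}$.
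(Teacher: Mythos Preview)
Your proposal is correct. The paper itself does not give a proof of this lemma at all: immediately after the statement it simply writes ``A proof is given in \cite[Lemma 4.2]{Hol23}.'' So there is no in-paper argument to compare against.

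That said, your argument is exactly the standard one behind such law-of-large-numbers moment bounds and is almost certainly what the cited reference does: condition on $\bar X_{i,t}^{\eta}=x$, separate the self-term $j=i$ (an $O(N^{-1}\|\psi_\eta\|_{L^\infty})$ contribution), and apply a Marcinkiewicz--Zygmund/Rosenthal type bound (or the equivalent combinatorial expansion) to the remaining sum of $N-1$ i.i.d.\ centered, $L^\infty$-bounded random variables to get $\mathbb{E}|N^{-1}S_N|^{2m}\le C(m)\|\psi_\eta\|_{L^\infty}^{2m}N^{-m}$. Your treatment of the vector-valued case by componentwise application and of the self-term via $|a+b|^{2m}\le 2^{2m-1}(|a|^{2m}+|b|^{2m})$ is fine. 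One cosmetic point: $h_i(t)$ is defined as the absolute value of the expression you decompose, so strictly speaking your displayed identity is for the quantity inside $|\cdot|$; this is harmless since you only need $h_i(t)^{2m}$.
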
 
    A proof is given in \cite[Lemma 4.2]{Hol23}.
    
    The second ingredient of the proof of the main theorem of this section (Theorem \ref{thm.coulomb}) is a convergence result in probability towards the intermediate particle system \eqref{eq:interPar}. A similar convergence in probability result has been proved by Lazarovici and Pickl for second order system in deriving Vlasov-Poisson system in \cite{LP}. It was extended to first order system with Coulomb interaction in the following lemma, the proof can be found in \cite[Theorem 4.12]{Hol23}. Note that in \cite{Hol23} the notation $\eta= N^{-\beta}$ was used whereas in the present paper we follow \cite{oeschlager1987fluctuation} and use $\eta=N^{-\beta/d}$.
    \begin{lemma}[Convergence in probability to the intermediate system \ref{eq:interPar}]\label{lem.prob}
    Let Assumption \ref{Assump.coulomb} holds. Let $V^\eta=\kappa\Phi^\eta$ with $\eta=N^{-\beta/d}$, then for $0<\beta<1/4$ and $\beta\frac{d+1}{d}<\alpha<1/2-\beta\frac{d-1}{d}$, we have that for any $\gamma>0$ and $T>0$, there exists $C(\gamma,T)>0$ such that for all $0<t<T$,
    	\begin{equation}\label{1.prop}
    	\mathbb{P}\bigg(\mathcal{A}_\alpha=\Big\{\omega\in\Omega:\max_{i=1,\ldots,N}|X_{i,t}^{N,\eta}-\bar X_{i,t}^{\eta}|>N^{-\alpha}\Big\}\bigg)
    	\le C(\gamma,T)N^{-\gamma}.
    	\end{equation}
    \end{lemma}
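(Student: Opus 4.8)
\emph{Proof proposal.} Since \eqref{eq:1} (with $V^\eta=\kappa\Phi^\eta$) and the intermediate system \eqref{eq:interPar} are driven by the \emph{same} Brownian motions $(B_t^i)_{i=1}^N$ and issued from the same initial data $(\xi_i)_{i=1}^N$, the plan is to run a synchronous coupling argument in the spirit of Lazarovici--Pickl \cite{LP}. The error $e_i(t):=X_{i,t}^{N,\eta}-\bar X_{i,t}^{\eta}$ then solves a random \emph{ordinary} differential equation (the noise cancels),
\begin{align*}
\frac{\de}{\de t}e_i=-\frac{\kappa}{2N}\sum_{j=1}^N\Big(\na\Phi^\eta\big(X_{i,t}^{N,\eta}-X_{j,t}^{N,\eta}\big)-\na\Phi^\eta\big(\bar X_{i,t}^{\eta}-\bar X_{j,t}^{\eta}\big)\Big)+\frac{\kappa}{2}\Big((\na\Phi^\eta*\rb^\eta)(\bar X_{i,t}^{\eta})-\frac1N\sum_{j=1}^N\na\Phi^\eta\big(\bar X_{i,t}^{\eta}-\bar X_{j,t}^{\eta}\big)\Big),
\end{align*}
with $e_i(0)=0$, where one uses $\na\Phi^\eta(0)=\mathbf{0}$ to turn $\sum_{j\neq i}$ into $\sum_{j=1}^N$. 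I would write $(\mathrm I)_i$ for the first (stability) group and $(\mathrm{II})_i$ for the second (fluctuation) group, and control $\mathcal E(t):=\max_i|e_i(t)|$ by a bootstrap/continuity argument: on the random maximal interval on which $\mathcal E(t)\le N^{-\alpha}$, estimate $(\mathrm I)$ and $(\mathrm{II})$ and close a Gronwall inequality that returns $\mathcal E(t)<N^{-\alpha}$ for $N$ large, forcing the exit time to be $T$.

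The fluctuation term is the easy one and needs no bootstrap: $(\mathrm{II})_i=\tfrac{\kappa}{2}h_i$ with $h_i$ exactly of the form \eqref{def.H} for $\psi_\eta=\na\Phi^\eta$, depending only on the i.i.d.\ family $(\bar X^{\eta}_{j,t})_j$. From Lemma \ref{lem.law}, $\|\na\Phi^\eta\|_{L^\infty}\le C\eta^{-(d-1)}=CN^{\beta(d-1)/d}$ and Jensen's inequality in time I would get
\begin{align*}
\mathbb{E}\Big[\Big(\int_0^T h_i(s)\,\de s\Big)^{2m}\Big]\le T^{2m-1}\int_0^T\mathbb{E}\big[h_i(s)^{2m}\big]\,\de s\le C(m,T)\,N^{-m\left(1-2\beta\frac{d-1}{d}\right)},
\end{align*}
so by Markov's inequality, a union bound over $i=1,\dots,N$, and choosing $m$ large, for every $\gamma>0$ one has $\mathbb{P}\big(\sup_i\int_0^T|(\mathrm{II})_i|\,\de s>N^{-\alpha'}\big)\le C(\gamma,T)N^{-\gamma}$ for any $\alpha'<\tfrac12-\beta\tfrac{d-1}{d}$; this is precisely the source of the upper restriction on $\alpha$.

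The stability term carries the real difficulty. The crude bound $|(\mathrm I)_i|\le\|D^2\Phi^\eta\|_{L^\infty}\mathcal E\le C\eta^{-d}\mathcal E=CN^{\beta}\mathcal E$ is hopeless, since it yields a Gronwall factor $e^{CN^{\beta}T}\to\infty$. The idea is to replace the pointwise Lipschitz constant of $\na\Phi^\eta$ by a \emph{particle average} of $|D^2\Phi^\eta|$ that is $O(1)$ with overwhelming probability. On the bootstrap region $\mathcal E\le N^{-\alpha}$, the segment joining $\bar X^{\eta}_i-\bar X^{\eta}_j$ and $X_i^{N,\eta}-X_j^{N,\eta}$ lies in the ball of radius $2N^{-\alpha}$ around $\bar X^{\eta}_i-\bar X^{\eta}_j$; since $N^{-\alpha}\ll\eta$ (the lower bound $\alpha>\beta\tfrac{d+1}{d}$ comes out of the quantitative bookkeeping of this scale separation and of the regularisation), a mean-value estimate gives $|(\mathrm I)_i|\le 2C\,\mathcal E(t)\,M_i(t)$ with $M_i(t):=\tfrac1N\sum_{j=1}^N\widetilde\psi_\eta(\bar X^{\eta}_i-\bar X^{\eta}_j)$ and $\widetilde\psi_\eta(a):=\sup_{|z-a|\le 2N^{-\alpha}}|D^2\Phi^\eta(z)|$, which still lies in $L^\infty$ with $\|\widetilde\psi_\eta\|_{L^\infty}\le C\eta^{-d}$ and, crucially, still satisfies $\|\widetilde\psi_\eta*\rb^\eta\|_{L^\infty}\le C$ by Assumption \ref{Assump.coulomb}. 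Since $M_i$ is again a law-of-large-numbers average, applying Lemma \ref{lem.law} once more with $\psi_\eta=\widetilde\psi_\eta$ (here $\|\widetilde\psi_\eta\|_{L^\infty}\le CN^\beta$ and $\beta<1/4<1/2$ make the moments summable), together with the a priori bound on $\widetilde\psi_\eta*\rb^\eta$, would produce an event $\Omega_{\mathrm{good}}$ with $\mathbb{P}(\Omega_{\mathrm{good}})\ge 1-C(\gamma,T)N^{-\gamma}$ on which $\int_0^T\max_iM_i(s)\,\de s\le C(T)$.

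Putting the pieces together on $\Omega_{\mathrm{good}}$ (intersected with the fluctuation good event), one has $\frac{\de}{\de t}\mathcal E\le 2C\,M(t)\mathcal E(t)+\max_i|(\mathrm{II})_i(t)|$ on the bootstrap region, with $M(t)=\max_iM_i(t)$, so Gronwall gives
\begin{align*}
\mathcal E(t)\le\exp\Big(2C\int_0^T M(s)\,\de s\Big)\int_0^T\max_i|(\mathrm{II})_i(s)|\,\de s\le C(T)\,N^{-\alpha'}
\end{align*}
for any $\alpha'<\tfrac12-\beta\tfrac{d-1}{d}$; since the admissible window $\beta\tfrac{d+1}{d}<\alpha<\tfrac12-\beta\tfrac{d-1}{d}$ is nonempty exactly when $\beta<1/4$, choosing $\alpha'\in(\alpha,\tfrac12-\beta\tfrac{d-1}{d})$ yields $\mathcal E(t)<N^{-\alpha}$ for $N\ge N_0$, so by continuity the exit time equals $T$ and \eqref{1.prop} holds on $\Omega_{\mathrm{good}}$; passing to complements gives $\mathbb{P}(\mathcal A_\alpha)\le C(\gamma,T)N^{-\gamma}$. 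The main obstacle, as indicated, is exactly the stability estimate: converting the pointwise Lipschitz bound $\|D^2\Phi^\eta\|_{L^\infty}\sim N^\beta$, which would destroy the Gronwall argument, into the averaged, $O(1)$ quantity $M_i$, which is what makes essential use of Lemma \ref{lem.law} and of the a priori control $\||D^2\Phi^\eta|*\rb^\eta\|_{L^\infty}\le C$ in Assumption \ref{Assump.coulomb}; the full quantitative version is carried out in \cite[Theorem 4.12]{Hol23}.
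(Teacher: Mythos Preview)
The paper does not give its own proof of this lemma but refers to \cite[Theorem~4.12]{Hol23}, which adapts the Lazarovici--Pickl synchronous-coupling strategy \cite{LP} to the present first-order diffusive setting. Your sketch is precisely that strategy---the fluctuation/stability decomposition, the use of Lemma~\ref{lem.law} for both pieces, the replacement of the blowing-up pointwise Lipschitz constant $\|D^2\Phi^\eta\|_{L^\infty}\sim N^\beta$ by the particle-averaged $O(1)$ quantity $M_i$, and the bootstrap/Gronwall closure---and you correctly locate the origin of the two constraints on $\alpha$ and close by citing the same reference, so the approaches coincide.
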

    The main part of this section is the following theorem, which shows how to combine Lemma \ref{lem.law} and Lemma \ref{lem.prob} to close the relative entropy estimates for Coulomb kernels in the moderate regime:
    
    \begin{theorem}\label{thm.coulomb}
    	Let assumption \eqref{est:solution} holds, then for $0<\beta<1/4$, there exists $C>0$ which does not depend on $N$ such that it holds that
    	\begin{align}
    	\mathcal{H}(\rho^\eta_N|\rb^\eta_N) \leq C(T)N^{-2\beta/d}
    	\end{align}
    	which implies
    	\begin{align}
    	\|\rho^\eta_{N,1}-\rb\|_{L^\infty(0,T;L^1(\R^d))}\leq N^{-2\beta/d},
    	\end{align}
    	where $\rb$ solves \eqref{1.ks}.
    \end{theorem}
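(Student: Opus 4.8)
The strategy mirrors the proof of Lemma~\ref{lm.N}, but replaces the mollified $L^2$ estimate of \cite{oeschlager1987fluctuation} (Lemma~\ref{lemL2}) by the combination of the law of large numbers (Lemma~\ref{lem.law}) and the convergence in probability (Lemma~\ref{lem.prob}). As in \eqref{ineq:part_H_N}, differentiating the relative entropy $\mathcal{H}(\rho_N^\eta|\rb_N^\eta)$ in time using the Kolmogorov equations \eqref{eq:li1} and \eqref{eq:li2} (now with $V^\eta=\kappa\Phi^\eta$), completing the square and discarding the good (Fisher-information) term yields
\begin{align*}
\frac{\de}{\de t}\mathcal{H}(\rho^\eta_N|\rb^\eta_N)
\leq \frac{1}{4N}\sum_{i=1}^N\int_{\R^{dN}}\Big|\frac1N\sum_{j=1}^N\nabla V^\eta(x_i-x_j)-\nabla V^\eta\ast\rb^\eta(x_i)\Big|^2\rho^\eta_N\,\de x_1\ldots\de x_N.
\end{align*}
Writing this probabilistically, the right-hand side equals $\tfrac14\mathbb{E}\big[\big|\tfrac1N\sum_j\nabla V^\eta(X^{N,\eta}_{i,t}-X^{N,\eta}_{j,t})-\nabla V^\eta\ast\rb^\eta(X^{N,\eta}_{i,t})\big|^2\big]$ (same for every $i$ by exchangeability), and the task is to bound this by $C\eta^2=CN^{-2\beta/d}$, after which Gronwall and the Csisz\'ar--Kullback--Pinsker inequality (together with $\|\rb^\eta-\rb\|_{L^\infty(0,T;L^1)}\le C\eta^2$ from Assumption~\ref{Assump.coulomb}) finish the theorem exactly as in the proof of Theorem~\ref{mainthm}.

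\textbf{Key step: controlling the drift difference.} To estimate $\mathbb{E}\big[\big|\tfrac1N\sum_j\nabla V^\eta(X^{N,\eta}_{i,t}-X^{N,\eta}_{j,t})-\nabla V^\eta\ast\rb^\eta(X^{N,\eta}_{i,t})\big|^2\big]$ I would insert the intermediate particles $\bar X^\eta_{j,t}$ and split into three pieces: (a) $\tfrac1N\sum_j[\nabla V^\eta(X^{N,\eta}_{i,t}-X^{N,\eta}_{j,t})-\nabla V^\eta(\bar X^\eta_{i,t}-\bar X^\eta_{j,t})]$, (b) $\tfrac1N\sum_j\nabla V^\eta(\bar X^\eta_{i,t}-\bar X^\eta_{j,t})-\nabla V^\eta\ast\rb^\eta(\bar X^\eta_{i,t})$, and (c) $\nabla V^\eta\ast\rb^\eta(\bar X^\eta_{i,t})-\nabla V^\eta\ast\rb^\eta(X^{N,\eta}_{i,t})$. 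Term (b) is exactly $h_i(t)$ from Lemma~\ref{lem.law} with $\psi_\eta=\nabla\Phi^\eta$, so $\mathbb{E}(h_i(t)^2)\le C\|\nabla\Phi^\eta\|_{L^\infty}^2N^{-1}\le C\eta^{-2(d-1)}N^{-1}=CN^{-1+2\beta(d-1)/d}$, which is $\le CN^{-2\beta/d}$ precisely when $\beta(d+1)/d<1/2$, i.e. for $\beta<d/(2(d+1))$ — in particular for $\beta<1/4$. Terms (a) and (c) involve the Lipschitz constant $\|D^2\Phi^\eta\|_{L^\infty}\le C\eta^{-d}$ of $\nabla\Phi^\eta$ times the particle distance $|X^{N,\eta}_{i,t}-\bar X^\eta_{i,t}|$ (for (c)), respectively $\tfrac1N\sum_j(|X^{N,\eta}_{i,t}-\bar X^\eta_{i,t}|+|X^{N,\eta}_{j,t}-\bar X^\eta_{j,t}|)$ for (a); on the good set $\mathcal{A}_\alpha^c$ from Lemma~\ref{lem.prob} these are bounded by $C\eta^{-d}N^{-\alpha}=CN^{(\beta-\alpha)}$, which we need $\le CN^{-\beta/d}$ (so that the squared contribution is $\le CN^{-2\beta/d}$); this holds as soon as $\alpha>\beta+\beta/d=\beta(d+1)/d$, which is exactly the lower bound on $\alpha$ assumed in Lemma~\ref{lem.prob}. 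On the bad set $\mathcal{A}_\alpha$ one uses the crude deterministic bound $|\nabla\Phi^\eta|\le C\eta^{-(d-1)}$ together with $\mathbb{P}(\mathcal{A}_\alpha)\le C(\gamma,T)N^{-\gamma}$ for $\gamma$ chosen large, making this contribution negligible (here one also needs, via Cauchy--Schwarz, a moment bound on the deterministic size of the integrand, which is polynomial in $N$ and hence absorbed).

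\textbf{Main obstacle.} The delicate point is matching the three exponent constraints so that they are simultaneously satisfiable: the LLN term (b) forces $\beta(d+1)/d<1/2$; the coupling terms (a),(c) on $\mathcal{A}_\alpha^c$ force $\alpha>\beta(d+1)/d$; and Lemma~\ref{lem.prob} only provides such an $\alpha$ in the window $\beta(d+1)/d<\alpha<1/2-\beta(d-1)/d$, which is nonempty exactly when $\beta(d+1)/d<1/2-\beta(d-1)/d$, i.e. $\beta<d/(2\cdot 2d)=1/4$ after simplification — matching the hypothesis $0<\beta<1/4$. So the arithmetic of the scaling exponents is the heart of the matter, and one must be careful that the interplay between $\|\nabla\Phi^\eta\|_{L^\infty}$, $\|D^2\Phi^\eta\|_{L^\infty}$, the coupling rate $N^{-\alpha}$, and the probability $N^{-\gamma}$ all close up to give the clean rate $N^{-2\beta/d}$ uniformly on $[0,T]$; the time integration and Gronwall step is then routine since the initial relative entropy vanishes, $\rho_N^\eta(0)=\rb_N^\eta(0)=\rho_0^{\otimes N}$.
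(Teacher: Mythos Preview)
Your proposal is correct and follows essentially the same route as the paper: the same relative-entropy differential inequality \eqref{ineq:part_H_N}, the same three-way splitting of the drift error (your (a),(b),(c) are the paper's $I_1,I_2,I_3$), the law of large numbers (Lemma~\ref{lem.law}) for (b), and the good-set/bad-set decomposition via Lemma~\ref{lem.prob} for (a) and (c), followed by Csisz\'ar--Kullback--Pinsker and the $L^1$ bound on $\rb^\eta-\rb$ from Assumption~\ref{Assump.coulomb}. Two minor remarks: no Gronwall is needed since the right-hand side does not involve $\mathcal{H}$ (just integrate in $t$), and your stated condition for (b) to be $\le CN^{-2\beta/d}$ simplifies to $\beta<1/2$ rather than $\beta(d+1)/d<1/2$ (check: $-1+2\beta(d-1)/d\le -2\beta/d$ iff $2\beta\le 1$), which of course still holds under $\beta<1/4$.
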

    
    \begin{proof}
    Following the same strategy as in the proof of the main theorem (Theorem \ref{mainthm}), in order to close the relative entropy estimate in \eqref{ineq:part_H_N}, we only need to give an estimate for the following term
    \begin{align} \label{estM}
    &\mathbb{E}\Big(\Big\langle \mu_N, \big|\nabla \Phi^\eta \ast (\mu_N - \rb^\eta)\big|^2 \Big\rangle\Big)\nonumber\\ &=\mathbb{E}\Big(\frac{1}{N}\sum_{i=1}^N\Big|\frac{1}{N}\sum_{j=1}^N\nabla\Phi^\eta(X_{i,t}^{N,\eta}-X_{j,t}^{N,\eta})-\nabla\Phi^\eta\ast\rb^\eta(X_{i,t}^{N,\eta})\Big|^2\Big)\nonumber\\
    &\leq \mathbb{E}\Big(\frac{1}{N}\sum_{i=1}^N\Big|\frac{1}{N}\sum_{j=1}^N\nabla\Phi^\eta(X_{i,t}^{N,\eta}-X_{j,t}^{N,\eta})-\frac{1}{N}\sum_{j=1}^N\nabla\Phi^\eta(\bar X_{i,t}^{\eta}-\bar X_{j,t}^{\eta})\Big|^2\Big)\nonumber\\
    &+\mathbb{E}\Big(\frac{1}{N}\sum_{i=1}^N\Big|\frac{1}{N}\sum_{j=1}^N\nabla\Phi^\eta(\bar X_{i,t}^{\eta}-\bar X_{j,t}^{\eta})-\nabla\Phi^\eta\ast\rb^\eta(\bar X_{i,t}^{\eta})\Big|^2\Big) \nonumber \\
      &+\mathbb{E}\Big(\frac{1}{N}\sum_{i=1}^N\Big|\nabla\Phi^\eta\ast\rb^\eta(\bar X_{i,t}^{\eta})-\nabla\Phi^\eta\ast\rb^\eta(X_{i,t}^{N,\eta})\Big|^2\Big) =I_1+I_2+I_3.
    \end{align}
    Since the solutions $\bar X^\eta_{i,t}$ are i.i.d. random variables, by using Lemma \ref{lem.law} (law of large numbers), we can estimate the second term easily:  Indeed, let $h_i(t)$ be defined as in \eqref{def.H} with $\psi_\eta = \nabla \Phi^\eta$, then Lemma \ref{lem.law} yields for $m=1$
    \begin{align*}
    	I_2&= \mathbb{E}\bigg(\frac{1}{N}\sum_{i=1}^N\Big|\frac{1}{N}\sum_{j=1}^N\nabla\Phi^\eta(\bar X_{i,t}^{\eta}-\bar X_{j,t}^{\eta})-\nabla\Phi^\eta\ast\rb^\eta(\bar X_{i,t}^{\eta})\Big|^2\bigg)\\
    	&\leq \dfrac{C\big\|\nabla\Phi^\eta\big\|_{L^\infty(\R^d)}^2}{N}\leq CN^{-1+2\beta(d-1)/d} \leq CN^{-1/2},
    \end{align*}
    where the estimates for regularized $\Phi$ in \eqref{est:Phieta} has been used. The estimates for $I_1$ and $I_3$ will be estimated by using the convergence in probability. In the following we use $I_1$ as the example, the estimate for $I_3$ can be done similarly. Let $\mathcal{A}_\alpha$ be defined as in Lemma \ref{lem.prob}, then it yields
    \begin{align*}
    I_1\leq &\; \mathbb{E}\bigg(\Big(\mathds{1}_{{\Omega \backslash\mathcal{A}_\alpha}}+\mathds{1}_{{\mathcal{A}_\alpha}}\Big)\frac{1}{N}\sum_{i=1}^N\Big|\frac{1}{N}\sum_{j=1}^N\nabla\Phi^\eta(X_{i,t}^{N,\eta}-X_{j,t}^{N,\eta})-\frac{1}{N}\sum_{j=1}^N\nabla\Phi^\eta(\bar X_{i,t}^{\eta}-\bar X_{j,t}^{\eta})\Big|^2\bigg)\\
    \leq & \; \frac{\big\|D^2\Phi^\eta\big\|^2_{L^\infty(\R^d)}}{N^{2\alpha}} +2\|\nabla\Phi^\eta\big\|_{L^\infty(\R^d)}^2\sup_{0<t<T} \mathbb{P}(\mathcal{A}_\alpha)\leq \dfrac{CN^{2\beta}}{N^{2\alpha}} + \dfrac{C(\gamma)N^{2\beta(d-1)}/d}{N^\gamma},
    \end{align*}
    where $\gamma>0$ from Lemma \ref{lem.prob} can be arbitrary large for $\alpha$ and $\beta$ chosen accordingly. Taking into account of the fact that $0<\beta<1/4$ and $\beta(d+1)/d<\alpha<1/2-\beta(d-1)/d$, we know that $2\beta - 2\alpha < 2\beta -2\beta(d+1)/d =-2\beta/d$ and hence $N^{-2\beta/d}$ is the leading order term in the above estimates.
    Therefore, combining this estimate together with \eqref{ineq:part_H_N}, we finish the relative entropy estimate
    \begin{align*}
    	&\frac{\de }{\de t} \mathcal{H}(\rho^\eta_N|\rb^\eta_N) \leq -\frac{1}{4N} \int_{\R^{dN}}\sum_{i=1}^N \left|\nabla_{x_i} \log \frac{\rho^\eta_N}{\rb^\eta_N}\right|^2 \rho^\eta_N \de x_1\ldots \de x_N + \dfrac{C}{N^{2\beta/d}},
    \end{align*}
    which by the Csisz\'ar-Kullback-Pinsker inequality implies that
    \begin{align*}
    \|\rho^\eta_{N,1}-\rb^\eta\|_{L^\infty(0,T;L^1(\R^d))} \leq CN^{-2\beta/d}
    \end{align*}
    Together with assumption \eqref{est:solution}, we get
    \begin{align*}
    \|\rho^\eta_{N,1}-\rb\|_{L^\infty(0,T;L^1(\R^d))} \leq \|\rho^\eta_{N,1}-\rb^\eta\|_{L^\infty(0,T;L^1(\R^d))}  + \|\rb^\eta-\rb\|_{L^\infty(0,T;L^1(\R^d))} \leq C N^{-2\beta/d} ,
    \end{align*}
     the theorem is proved.
    \end{proof}

    \appendix

    \section{Useful inequalities}
    This appendix contains analytical inequalities used in this note. The following two Moser-type inequalities are used for the analysis of the intermediate equation \eqref{eq:3}.
    \begin{lemma}[Moser-type commutator inequality, {\cite[Prop.~2.1(B)]{Maj84}}]
    \label{lem.comm}
    Let $s\in\N$ and $\alpha\in\N_0^n$ with $|\alpha|=s$. Then
    there exists $C>0$ such that for all $f\in H^s(\R^d)\cap W^{1,\infty}(\R^d)$
    and $g\in H^{s-1}(\R^d)\cap L^\infty(\R^d)$,
    $$
      \|D^\alpha(fg)-fD^\alpha(g)\|_{L^2(\R^d)} \le C\big(\|Df\|_{L^\infty(\R^d)}
    	\|D^{s-1}g\|_{L^2(\R^d)} + \|D^s f\|_{L^2(\R^d)}\|g\|_{L^\infty(\R^d)}\big),
    $$
    where $D^s=\sum_{|\alpha|=s}D^\alpha$.
    \end{lemma}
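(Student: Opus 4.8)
The plan is to prove this as the classical Moser (Kato--Ponce-type) commutator estimate, combining the Leibniz rule with Gagliardo--Nirenberg interpolation; by a routine density argument one may first assume $f,g\in C_c^\infty(\R^d)$ and pass to the limit at the end. First I would expand, via the Leibniz rule,
\[
D^\alpha(fg)-f\,D^\alpha g=\sum_{0<\beta\leq\alpha}\binom{\alpha}{\beta}\,D^\beta f\,D^{\alpha-\beta}g ,
\]
the point being that the single top-order term $f\,D^\alpha g$ (the $\beta=0$ summand) has been removed, so in every remaining summand $f$ carries between $1$ and $s$ derivatives and $g$ carries between $0$ and $s-1$ derivatives. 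It therefore suffices to bound $\|D^\beta f\,D^{\alpha-\beta}g\|_{L^2(\R^d)}$ for each fixed multi-index $\beta$ with $1\leq|\beta|=j\leq s$, the four norms appearing on the right-hand side of the lemma matching exactly the endpoints $j\in\{1,s\}$ of this range.

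For such a $\beta$ I would estimate by H\"older's inequality, $\|D^\beta f\,D^{\alpha-\beta}g\|_{L^2}\leq \|D^j f\|_{L^p}\,\|D^{s-j}g\|_{L^q}$, with the exponents chosen so that $\tfrac1p=\tfrac{j-1}{2(s-1)}$ and $\tfrac1q=\tfrac{s-j}{2(s-1)}$ (so $\tfrac1p+\tfrac1q=\tfrac12$, and $p=\infty$ when $j=1$, $q=\infty$ when $j=s$). To each factor I would then apply Gagliardo--Nirenberg on $\R^d$, interpolating $D^j f=D^{j-1}(Df)$ between $\|Df\|_{L^\infty}$ and $\|D^{s-1}(Df)\|_{L^2}=\|D^s f\|_{L^2}$, and $D^{s-j}g$ between $\|g\|_{L^\infty}$ and $\|D^{s-1}g\|_{L^2}$. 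With the weight $\theta:=\tfrac{j-1}{s-1}\in[0,1]$ this gives $\|D^j f\|_{L^p}\leq C\|Df\|_{L^\infty}^{1-\theta}\|D^s f\|_{L^2}^{\theta}$ and $\|D^{s-j}g\|_{L^q}\leq C\|g\|_{L^\infty}^{\theta}\|D^{s-1}g\|_{L^2}^{1-\theta}$; multiplying and using the weighted arithmetic--geometric mean inequality $a^{1-\theta}b^{\theta}\leq(1-\theta)a+\theta b\leq a+b$ with $a=\|Df\|_{L^\infty}\|D^{s-1}g\|_{L^2}$ and $b=\|D^s f\|_{L^2}\|g\|_{L^\infty}$ yields the bound $C\big(\|Df\|_{L^\infty}\|D^{s-1}g\|_{L^2}+\|D^s f\|_{L^2}\|g\|_{L^\infty}\big)$ for the single summand. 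Summing over the finitely many admissible $\beta$ closes the estimate.

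The part requiring care is the interpolation bookkeeping: one has to verify that the H\"older pair $(p,q)$ is admissible and that the Gagliardo--Nirenberg exponents are consistent with $\theta$ for every $1\leq j\leq s$, including the two endpoint cases $j=1$ and $j=s$, where one of $p,q$ degenerates to $\infty$ and the corresponding interpolation step is trivial. Since the underlying space is all of $\R^d$, the Gagliardo--Nirenberg inequalities hold in scale-invariant form for every dimension $d\geq1$ and no boundary, extension, or BMO-endpoint issue arises (the only values of $\theta$ at which the endpoint exponent would be relevant correspond to $p$ or $q$ equal to $\infty$, handled trivially). An entirely equivalent route, and the one underlying the reference \cite{Maj84}, is a Littlewood--Paley/paraproduct decomposition of $fg$, splitting into high--low, low--high, and high--high interactions and summing the dyadic pieces; I would present the interpolation argument above, as it is the most elementary and keeps the constants transparent.
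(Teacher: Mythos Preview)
Your argument is correct and is precisely the classical proof: Leibniz expansion to remove the top-order term, H\"older with the scaling-consistent exponents $\tfrac{1}{p}=\tfrac{j-1}{2(s-1)}$, $\tfrac{1}{q}=\tfrac{s-j}{2(s-1)}$, Gagliardo--Nirenberg interpolation on each factor, and Young's inequality to additivize the product. The bookkeeping you flag (endpoint cases $j=1$, $j=s$, and the degenerate case $s=1$) is handled exactly as you say.

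As for comparison with the paper: there is nothing to compare. The paper does not prove this lemma at all; it is stated in the appendix purely as a quoted tool with a citation to \cite[Prop.~2.1(B)]{Maj84} and no argument is given. Your write-up is in fact the proof that appears in that reference, so you have supplied what the paper only cites.
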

    
    \begin{lemma}[Moser-type estimate I, {\cite[Prop.~2.1(A)]{Maj84}}]\label{lem.moser}
    Let $s\in\N$ and $\alpha\in\N_0^n$ with $|\alpha|=s$. Then there exists
    a constant $C>0$ such that for all $f$, $g\in H^s(\R^d)\cap L^\infty(\R^d)$,
    $$
      \|D^\alpha(fg)\|_{L^2(\R^d)} \le C\big(\|f\|_{L^\infty(\R^d)}\|D^s g\|_{L^2(\R^d)}
    	+ \|D^s f\|_{L^2(\R^d)}\|g\|_{L^\infty(\R^d)}\big).
    $$
    \end{lemma}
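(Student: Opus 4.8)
The plan is to reduce the estimate to the Leibniz rule together with Gagliardo--Nirenberg interpolation and Young's inequality, which is the classical Moser argument; alternatively one may simply quote \cite[Prop.~2.1(A)]{Maj84}, from which the statement is taken verbatim.

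First I would expand the derivative by the Leibniz formula
\[
D^\alpha(fg)=\sum_{\beta\le\alpha}\binom{\alpha}{\beta}\,D^\beta f\,D^{\alpha-\beta}g ,
\]
so that, by the triangle inequality in $L^2(\R^d)$, it suffices to bound $\|D^\beta f\,D^{\alpha-\beta}g\|_{L^2(\R^d)}$ for each of the finitely many multi-indices $\beta\le\alpha$. Writing $j=|\beta|\in\{0,1,\dots,s\}$, the two endpoint cases are immediate: for $j=0$ one has $\|f\,D^\alpha g\|_{L^2}\le\|f\|_{L^\infty}\|D^s g\|_{L^2}$, and for $j=s$ one has $\|D^\alpha f\,g\|_{L^2}\le\|D^s f\|_{L^2}\|g\|_{L^\infty}$, both already of the desired form.

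For the intermediate terms $1\le j\le s-1$ I would apply Hölder's inequality with the exponents $p=2s/j$ and $q=2s/(s-j)$, which satisfy $1/p+1/q=1/2$, obtaining
\[
\|D^\beta f\,D^{\alpha-\beta}g\|_{L^2(\R^d)}\le\|D^\beta f\|_{L^{2s/j}(\R^d)}\,\|D^{\alpha-\beta}g\|_{L^{2s/(s-j)}(\R^d)} ,
\]
and then invoke the Gagliardo--Nirenberg interpolation inequalities
\[
\|D^\beta f\|_{L^{2s/j}(\R^d)}\le C\,\|D^s f\|_{L^2(\R^d)}^{j/s}\,\|f\|_{L^\infty(\R^d)}^{1-j/s},\qquad
\|D^{\alpha-\beta}g\|_{L^{2s/(s-j)}(\R^d)}\le C\,\|D^s g\|_{L^2(\R^d)}^{(s-j)/s}\,\|g\|_{L^\infty(\R^d)}^{j/s}.
\]
Setting $A=\|D^s f\|_{L^2(\R^d)}\|g\|_{L^\infty(\R^d)}$ and $B=\|f\|_{L^\infty(\R^d)}\|D^s g\|_{L^2(\R^d)}$, the product of these two bounds is exactly $C\,A^{j/s}B^{(s-j)/s}$, and since $j/s+(s-j)/s=1$ the weighted Young inequality gives $A^{j/s}B^{(s-j)/s}\le A+B$. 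Summing over the finitely many $\beta\le\alpha$, with constants depending only on $s$ and $d$, then yields the claim.

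The one point requiring care — and the step I expect to be the main obstacle — is verifying the Gagliardo--Nirenberg inequalities used above for functions merely in $H^s(\R^d)\cap L^\infty(\R^d)$ with no decay assumption. Concretely one must check the scaling relation $\tfrac{j}{2s}=\tfrac{j}{d}+\tfrac{j}{s}\big(\tfrac12-\tfrac{s}{d}\big)$ (which holds trivially) and that the interpolation parameter $\theta=j/s$ lies in the admissible range $[\,j/s,1\,]$, so that none of the exceptional borderline cases of the Gagliardo--Nirenberg theorem is hit; with $\theta=j/s$ this range condition holds with equality, so the inequalities are valid and the argument goes through.
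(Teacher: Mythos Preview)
Your argument is the classical Moser proof and is correct; the Leibniz expansion, the H\"older splitting with exponents $2s/j$ and $2s/(s-j)$, the Gagliardo--Nirenberg interpolation at $\theta=j/s$, and the final Young inequality are exactly the standard steps, and your check of the admissibility condition is fine (note also that since $\theta=j/s<1$ for the intermediate terms, the exceptional case of Gagliardo--Nirenberg, which only excludes $\theta=1$ when $s-j-d/2$ is a nonnegative integer, is never triggered).

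The paper itself does not give a proof of this lemma: it is stated in the appendix as a quotation of \cite[Prop.~2.1(A)]{Maj84} and is used as a black box in the proof of Lemma~\ref{lem.E}. So there is nothing to compare against beyond the cited reference, and your write-up is precisely the argument one finds there.
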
 
    
    The Csisz\'ar-Kullback-Pinsker inequality is crucial for this article since it shows a connection between convergence in $L^1(\R^d)$-norm and with respect to the relative entropy.
    \begin{lemma}[Csisz\'ar-Kullback-Pinsker inequality, \cite{Villani02}]\label{lem:ckp}
    Let $m \in \N$ be fixed. If $f$ and $g$ are probability density functions on $\R^{dm}$ with respect to the Lebesgue measure, then
    \begin{align*}
    \Vert f - g \Vert_{L^1(\R^{dm})} \leq \sqrt{2 \int_{\R^{dm}} f \log \frac{f}{g} \de x}.
    \end{align*}
    \end{lemma}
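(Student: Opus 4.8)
The statement to be established is the Csisz\'ar--Kullback--Pinsker inequality, and the plan is the classical three-step argument: a reduction to the absolutely continuous case, a sharp elementary scalar inequality, and an application of Cauchy--Schwarz with a suitably chosen weight. First I would dispose of the case in which $f>0$ on some subset of $\{g=0\}$ of positive Lebesgue measure: there the integrand $f\log(f/g)$ equals $+\infty$ on a set of positive $f$-mass, so $\int_{\R^{dm}}f\log(f/g)\,\de x=+\infty$ and the inequality is trivial. Hence I may assume $\{g=0\}\subseteq\{f=0\}$ up to a null set, so that $\int_{\{g>0\}}f\,\de x=1=\int_{\{g>0\}}g\,\de x$, and I work on $\{g>0\}$ with the ratio $u:=f/g$.

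The core ingredient is the pointwise bound
\[
\phi(x):=x\log x-x+1\ \ge\ \frac{3(x-1)^2}{2x+4}\qquad\text{for all }x\ge0,
\]
which I would prove by elementary calculus applied to $\psi(x):=(2x+4)\phi(x)-3(x-1)^2$. One computes $\psi(1)=0$, $\psi'(x)=(4x+4)\log x-8x+8$ so that $\psi'(1)=0$, and $\psi''(x)=4\big(\log x+\tfrac1x-1\big)\ge 0$ on $(0,\infty)$, the last because $x\mapsto\log x+\tfrac1x-1$ attains its global minimum $0$ at $x=1$. Thus $\psi$ is convex on $(0,\infty)$ with a stationary value $0$ at $x=1$, whence $\psi\ge0$ there; together with $\psi(0^+)=4-3=1>0$ this gives $\psi\ge0$ on $[0,\infty)$, i.e.\ the claimed inequality.

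Now, using $\int_{\{g>0\}}(f-g)\,\de x=0$ and the bound $\dfrac{(u-1)^2}{2u+4}\le\dfrac13\phi(u)$ from the previous step,
\[
\int_{\{g>0\}}\frac{(u-1)^2}{2u+4}\,g\,\de x\le\frac13\int_{\{g>0\}}g\,\phi(u)\,\de x=\frac13\int_{\{g>0\}}\Big(f\log\frac fg-f+g\Big)\de x=\frac13\int_{\R^{dm}}f\log\frac fg\,\de x.
\]
On the other hand, writing $|f-g|=|u-1|\,g$ on $\{g>0\}$ and applying Cauchy--Schwarz with the splitting $|u-1|\,g=\dfrac{|u-1|\sqrt g}{\sqrt{2u+4}}\cdot\sqrt{(2u+4)\,g}$,
\[
\|f-g\|_{L^1(\R^{dm})}^2=\Big(\int_{\{g>0\}}|u-1|\,g\,\de x\Big)^2\le\Big(\int_{\{g>0\}}\frac{(u-1)^2}{2u+4}\,g\,\de x\Big)\Big(\int_{\{g>0\}}(2u+4)\,g\,\de x\Big),
\]
and the last factor equals $2\int f\,\de x+4\int g\,\de x=6$. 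Multiplying the two displays gives $\|f-g\|_{L^1(\R^{dm})}^2\le 6\cdot\tfrac13\int_{\R^{dm}}f\log(f/g)\,\de x=2\int_{\R^{dm}}f\log(f/g)\,\de x$, which is the assertion.

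The only genuinely delicate point is the scalar inequality: the weight $3/(2x+4)$ must be exactly calibrated so that the companion Cauchy--Schwarz factor $\int(2u+4)g\,\de x$ evaluates to an absolute constant (here $6$), independent of $f$ and $g$; with a looser weight one would not recover the sharp factor $2$. Everything else---the absolute-continuity reduction, the mass identities $\int f=\int g=1$, and Cauchy--Schwarz---is routine bookkeeping.
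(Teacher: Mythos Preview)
Your proof is correct and follows a standard sharp argument for the Csisz\'ar--Kullback--Pinsker inequality. Note, however, that the paper does not actually prove this lemma: it merely states the inequality in the appendix and cites Villani's survey as a reference, so there is no ``paper's own proof'' to compare against. What you have written is a complete, self-contained derivation---the calibrated scalar bound $\phi(x)\ge 3(x-1)^2/(2x+4)$ together with Cauchy--Schwarz is precisely the classical route that recovers the optimal constant $2$---and it would serve perfectly well if the paper wished to include a proof rather than a citation.
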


    The next lemma gives an important inequality which shows that the scaled relative entropy of any $k$-marginal can be bounded by the total relative entropy. A similar result can be found in \cite[Lemma 3.9]{Miclo01}. We present a short proof for the reader's convenience.
    \begin{lemma}[Super--additivity of relative entropy with respect to product measure] \label{lem:superadd}Let $N \in \N$ be fixed and let $F_N$ be a probability distribution on $\R^{dN}$. By $F_{N,k}$ we denote the $k$-th marginal of $F_N$ for any $k \geq 1$. Furthermore, for any probability density function $\rb^\eta$ on $\R^d$, we define the relative entropy of $F_{N,k}$ w.r.t $(\rb^\eta)^{\otimes k }$ on $\R^{dk}$ as
    \begin{align*}
    \mathcal{H}_k(F_{N,k}|(\rb^\eta)^{\otimes k } ):=\int_{\R^{dk}} F_{N,k} \log \frac{F_{N,k}}{(\rb^\eta)^{\otimes k }} \de x_1 \ldots \de x_k.
    \end{align*}
    Then, it holds for any $k\geq 1$ that
    \begin{align}\label{appendix_superadd}
    \mathcal{H}_k(F_{N,k}|(\rb^\eta)^{\otimes k } ) +  \mathcal{H}_{N-k}(F_{N,{N-k}}|(\rb^\eta)^{\otimes (N-k) } )\leq \mathcal{H}_N(F_N|(\rb^\eta)^{\otimes N}).
    \end{align} 
    Additionally, by denoting with $[r]$ the integer part of any $r\in \R$, this implies
    \begin{align}\label{appendix_N_k_relativ}
    \frac{1}{N} \mathcal{H}_N(F_N|(\rb^\eta)^{\otimes N}) \geq \frac{1}{N}\bigg[\frac{N}{k}\bigg] \mathcal{H}_k(F_{N,k}|(\rb^\eta)^{\otimes k } ) \geq \frac{1}{2k} \mathcal{H}_k(F_{N,k}|(\rb^\eta)^{\otimes k } ).
    \end{align}
    \end{lemma}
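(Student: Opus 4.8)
The plan is to deduce the super-additivity inequality \eqref{appendix_superadd} from a single exact decomposition of the total relative entropy together with the non-negativity of relative entropy between probability densities, and then to obtain \eqref{appendix_N_k_relativ} by iterating this inequality.

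\textbf{Step 1: an exact splitting identity.} Write $\mu:=\rb^\eta$ and abbreviate $x=(x_1,\dots,x_N)$, $x'=(x_1,\dots,x_k)$, $x''=(x_{k+1},\dots,x_N)$, so that the reference measure factorizes as $\mu^{\otimes N}(x)=\mu^{\otimes k}(x')\,\mu^{\otimes(N-k)}(x'')$. Introduce the auxiliary product density $G_N(x):=F_{N,k}(x')\,F_{N,N-k}(x'')$; note $F_N\ll G_N$ since $F_N(x)>0$ forces $F_{N,k}(x')>0$ and $F_{N,N-k}(x'')>0$. Splitting $\log\frac{F_N}{\mu^{\otimes N}}=\log\frac{F_N}{G_N}+\log\frac{G_N}{\mu^{\otimes N}}$ and using that in the second term the integrand decomposes as $\log\frac{F_{N,k}(x')}{\mu^{\otimes k}(x')}+\log\frac{F_{N,N-k}(x'')}{\mu^{\otimes(N-k)}(x'')}$ (so that integrating out the complementary block turns $F_N$ into $F_{N,k}$, resp. $F_{N,N-k}$), one arrives at
\begin{align*}
\mathcal{H}_N(F_N|\mu^{\otimes N})
=\mathcal{H}_N(F_N\,|\,G_N)
+\mathcal{H}_k(F_{N,k}|\mu^{\otimes k})
+\mathcal{H}_{N-k}(F_{N,N-k}|\mu^{\otimes(N-k)}).
\end{align*}

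\textbf{Step 2: non-negativity.} Since $F_N$ and $G_N$ are both probability densities on $\R^{dN}$, Jensen's inequality (equivalently, integrating the elementary bound $a\log(a/b)\ge a-b$) gives $\mathcal{H}_N(F_N|G_N)\ge 0$. Discarding this term in the identity of Step 1 proves \eqref{appendix_superadd}.

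\textbf{Step 3: iteration.} Here I would use that in the intended application $F_N=\rho_N^\eta$ is exchangeable, so that the marginal of $F_N$ on any block of $k$ coordinates equals $F_{N,k}$ and on any block of $N-k$ coordinates equals $F_{N,N-k}$. Setting $m:=[N/k]$, apply \eqref{appendix_superadd} to $F_N$, then to its $(N-k)$-marginal, and so on, peeling off one block of size $k$ at each step; after $m$ steps this yields
\begin{align*}
m\,\mathcal{H}_k(F_{N,k}|\mu^{\otimes k})+\mathcal{H}_{N-mk}(F_{N,N-mk}|\mu^{\otimes(N-mk)})\le \mathcal{H}_N(F_N|\mu^{\otimes N}),
\end{align*}
and the last term is $\ge 0$ by Step 2 (and absent if $k\mid N$). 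Dividing by $N$ gives the first inequality in \eqref{appendix_N_k_relativ}; the second follows from $[N/k]\ge N/(2k)$ for all $1\le k\le N$, since if $k\le N<2k$ then $[N/k]=1\ge N/(2k)$, while if $N\ge 2k$ then $[N/k]\ge N/k-1\ge N/(2k)$.

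\textbf{Main obstacle.} There is no genuine analytic difficulty; the two points to handle with care are (i) the bookkeeping in Step 1 — tracking which coordinates are integrated out so that the cross-terms collapse exactly to the stated marginals — and (ii) the use of exchangeability of $F_N$ in Step 3, without which the iteration would only produce a sum over a fixed partition into $k$-blocks rather than the factor $[N/k]$; this is harmless here because $\rho_N^\eta$ is symmetric. All relative entropies may take the value $+\infty$, and the inequalities hold with the usual conventions.
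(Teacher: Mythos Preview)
Your proof is correct and follows essentially the same route as the paper: the paper computes $\mathcal{H}_N-\mathcal{H}_k-\mathcal{H}_{N-k}$ directly and recognises it as $\mathcal{H}_N(F_N\,|\,F_{N,k}\otimes F_{N,N-k})\ge 0$, then iterates and uses $[r]\ge r/2$ for $r\ge 1$, exactly as you do. Your explicit remark that the iteration in Step~3 relies on exchangeability of $F_N$ is a welcome clarification that the paper leaves implicit.
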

    \begin{proof} For simplicity, we drop the argument in the relative entropy and write $\mathcal{H}_\ell:=\mathcal{H}_\ell(F_{N,\ell}|(\rb^\eta)^{\otimes \ell}) $ for any $\ell \leq N$. Similar as the proof in \cite[Lemma 3.3 (iv)]{HaurayMischler14}, we get by the definition of the $k$-th marginal 
    \begin{align*}
    &\mathcal{H}_N - \mathcal{H}_k - \mathcal{H}_{N-k} = \int_{\R^{dN}} F_N(x_1,\ldots, x_N) \log\frac{F_N(x_1,\ldots, x_N)}{(\rb^\eta)^{\otimes N}} \de x_1\ldots \de x_N \\
    &- \int_{\R^{dN}} F_{N}(x_1,\ldots, x_N) \Big[\log\frac{F_{N,k}(x_1,\ldots, x_k)}{(\rb^\eta)^{\otimes k}} + \log \frac{F_{N,{N-k}}(x_{k+1},\ldots, x_{N})}{(\rb^\eta)^{\otimes {N-k}}}\Big]\de x_1\ldots \de x_N \\
    &= \int_{\R^{dN}} F_N(x_1,\ldots, x_N) \log\frac{F_N(x_1,\ldots, x_N)}{F_{N,k}(x_1, \ldots,x_k) \otimes F_{N,N-k}(x_{k+1}, \ldots, x_N)} \de x_1\ldots \de x_N \\
    &\geq 0,
    \end{align*}
    due to the non-negativity of the relative entropy and the fact that $$F_{N,N-k}\otimes F_{N,k},$$ is a density function on $\R^{dN}$.
    
    Assuming that $N=m k$ for $m \in \N$ and iterating this argument for $\mathcal{H}_{N-\ell k}$ leads to 
    \begin{align*}
    \mathcal{H}_N - m\mathcal{H}_k \geq 0,
    \end{align*}
    which is -- since $m=N/k$ -- equivalent to $\frac{1}{N} \mathcal{H}_N \geq \frac{1}{k} \mathcal{H}_k$. 
    If $N=mk + n $ for $n < k $ and $m\in \N$, then we get 
    \begin{align*}
    \mathcal{H}_N - m \mathcal{H}_k - \mathcal{H}_n \geq 0,
    \end{align*}
    and hence with noting that $m=[N/k]$ and the non-negativity of the relative entropy
    \begin{align*}
    \mathcal{H}_N \geq \bigg[\frac{N}{k}\bigg] \mathcal{H}_k + \mathcal{H}_n \geq \bigg[\frac{N}{k}\bigg] \mathcal{H}_k.
    \end{align*}
    Consequently, $\frac{1}{N} \mathcal{H}_N \geq \frac{1}{N}\big[\frac{N}{k}\big] \mathcal{H}_k \geq \frac{1}{2k} \mathcal{H}_k$ since $[r] \geq \frac{1}{2} r$ for any $r \in \R_{\geq 1}$. This finishes the proof.
    \end{proof}

\end{document}